\numberwithin{equation}{section}
\theoremstyle{plain}
\theoremstyle{definition}
\theoremstyle{proposition}
\newtheorem{proposition}{Proposition}[section]
\theoremstyle{example}
\theoremstyle{lemma}
\newtheorem{lemma}{Lemma}[section]
\theoremstyle{assumption}
\newtheorem{assumption}{Assumption}[section]
\theoremstyle{theorem}
\newtheorem{theorem}{Theorem}[section]
\theoremstyle{remark}
\newtheorem*{remark}{Remark}
\newcommand{\E}{\operatorname{E}}
\newcommand\numberthis{\addtocounter{equation}{1}\tag{\theequation}}
\newenvironment{my_enum}{
\begin{enumerate}[(a)]
  \setlength{\itemsep}{1pt}
  \setlength{\parskip}{3pt}
  \setlength{\parsep}{3pt}
}{\end{enumerate}}
\title{Posterior consistency for the spectral density of non-Gaussian stationary time series}
\author{Yifu Tang\thanks{Department of Statistics, The University of Auckland, Auckland, New Zealand} \and Claudia Kirch\thanks{Institute for Mathematical Stochastics, Department of Mathematics, Otto-von-Guericke University, Magdeburg, Germany} \and Jeong Eun Lee\thanks{Department of Statistics, The University of Auckland, Auckland, New Zealand} \and Renate Meyer\thanks{Department of Statistics, The University of Auckland, Auckland, New Zealand}}
\date{}
\begin{document}
\maketitle


\begin{abstract}
Various nonparametric approaches for Bayesian spectral density estimation of stationary time series have been suggested in the literature, mostly based on the Whittle likelihood \citep{whittle} approximation. A generalization of this approximation
has been proposed in \citet{kirch_et_al} who prove posterior consistency for spectral density estimation in 
combination with the Bernstein-Dirichlet process prior for Gaussian time series.   In this paper, we will extend the posterior consistency result to non-Gaussian time series by employing a general consistency theorem of \citet{shalizi} for dependent data and misspecified models. As a special case, posterior consistency for the spectral density under the Whittle likelihood as proposed by \citet{choudhuri_et_al} is also extended to non-Gaussian time series. Small sample properties of this approach are illustrated with several examples of non-Gaussian time series.
\end{abstract}

\section{Introduction}
Many ecological, epidemiological, and physical time series show periodic behaviour. 
Spectral analysis of stationary time series in the frequency domain  is useful in detecting cycles and modelling the second order dependence structure of the data. 
However, a Bayesian approach generally requires an evaluation of the likelihood function. Even in the Gaussian case, the likelihood function can be computationally intractable due to the necessary inversion of a high-dimensional covariance matrix.
Therefore, most  Bayesian nonparametric methods for spectral density estimation (e.g.\ \citealt{Gango98,Liseo01,choudhuri_et_al,Hermansen08,Chopin13,Cadonna2017}) have used the Whittle likelihood approximation \citep{whittle} which was originally suggested as an approximation for Gaussian time series but later extended to non-Gaussian time series by \citet{10.2307/3212501}. Apart from avoiding matrix inversion, the Whittle likelihood has the advantage that it depends directly on the spectral density instead of indirectly via an inverse Fourier transform. However, using the Whittle likelihood approximation can be inefficient for non-Gaussian time series with moderate sample sizes \citep{a_note_on_whittle} and even for Gaussian time series with high autocorrelations and small sample sizes.
To address this issue, \citet{Chopin13} used importance sampling to correct a posterior sample based on the Whittle likelihood, \citet{Sykulski2017} introduced the de-biased Whittle likelihood,  \citet{RaoSuhasiniSubba2020RtGa} suggested  a new frequency-domain quasi-likelihood, and \citet{kirch_et_al} the so-called corrected likelihood. The latter is a generalization of the Whittle likelihood based on the idea of utilizing the efficiencies of a parametric working model but correcting this nonparametrically in the frequency domain to make it robust with respect to violations of the parametric  model assumptions.

When using any of these pseudo-likelihoods, one has to deal with model misspecification. An ensuing question is: When one adopts a pseudo-likelihood, will the posterior distribution associated with this pseudo-likelihood still be consistent in the sense that the posterior distribution eventually concentrates in the neighbourhood of the true parameter, namely the underlying spectral density function in this case, as the sample size gets large? For parametric Bayesian modelling using the Whittle likelihood, the Bernstein-von Mises theorem \citep{tamaki2008} guarantees that under some regularity conditions, Bayesian and frequentist inference procedures can produce the same asymptotic result. In the case of nonparametric Bayesian modelling, however, posterior consistency needs to be proved explicitly. While in the frequentist literature on nonparametric spectral density estimation (e.g. \citealt{pawitan_and_osullivan1994, kakizawa2006}) it is common to prove asymptotic results beyond the Gaussianity assumption, the same cannot be said for Bayesian nonparametric spectral density estimation. Prior works such as \cite{choudhuri_et_al} and \cite{kirch_et_al} established posterior consistency by utilizing the result that the Gaussian measure and the distribution associated with the pseudo-likelihood are mutually contiguous. However, in practice, it is often the case that the time series data at hand does not look anywhere close to a Gaussian time series. This calls for a theoretical justification of posterior consistency under mild assumptions on the time series without having to assume Gaussianity.

The difficulty of extending the consistency result to non-Gaussian time series is due to the fact that the contiguity argument, on which prior works relied heavily, is no longer viable for general non-Gaussian time series as shown in \citet[Section 4.3]{Alex-thesis}. Therefore, a novel method is needed to establish posterior consistency under mild conditions on the time series. To this end, we adopt the general consistency theorem of \citet{shalizi}.

The goal of this paper is to prove that under rather general conditions regarding the underlying time series, the posterior distribution associated with the corrected Whittle likelihood will still be consistent. As the Whittle likelihood is a special case of the corrected likelihood, the result will carry over to the posterior distribution based on the Whittle likelihood.

The article is organized as follows. A modification of the general consistency theorem of \citet{shalizi}, which is the main tool used to prove consistency in this paper, is introduced in Section \ref{section:2}. The assumptions on the time series, together with the working model, the parameter space and the prior are detailed in Section \ref{section:3}. In Section \ref{section:4} the conditions on time series are summarized with some examples. The proof of posterior consistency is illustrated in Section \ref{section:5}. Section \ref{section:6} discusses the tools and assumptions employed in this paper and outlines possible future directions. The proof of Theorem \ref{thm:2.1}, the modified theorem of \citet{shalizi}, is given in Appendix \ref{appendix:A}. The proof of Proposition \ref{proposition:3.1} as well as some results regarding Lipschitz norm and Bernstein polynomials are stated in Appendix \ref{appendix:B}. Finally, Appendix \ref{appendix:C} contains the proof of Proposition \ref{proposition:3.2}.

\section{Modification of the general consistency theorem of Shalizi}\label{section:2}
Historically, people tend to prove posterior consistency by gauging the discrepancy between the probability distribution of the sample and the 'proxy model' via Kullback-Leibler (KL) divergence. Examples are \citet{SchwartzLorraine1965OBp}, \citet{choudhuri_et_al}, \citet{choi_and_schervish}, \citet{kleijn_and_van_der_vaart} and \citet{shalizi}, to name but a few. Most of these papers require that the KL divergence be finite, which is reasonable as it ensures that even in the case of misspecification, the  discrepancy between the true distribution and the proxy model is not too severe. Nevertheless, an interesting question is: Will we obtain a consistent result even if we use some proxy model which drastically deviates from the true distribution in terms of the KL divergence? If the answer is affirmative, then is it possible to prove the consistency without resorting to the KL divergence?

Often, we just want to estimate the value of a parameter pertaining to the probability distribution of the sample. For instance, the parameter of interest may be the expectation, the variance-covariance matrix, the quantile function or the spectral density function. It is possible that the proxy model will result in a consistent posterior even though it deviates from the true distribution severely. We will use a rather simple example to illustrate this idea. Suppose we observe an i.i.d. real sequence $Y_1,Y_2,\cdots,Y_n$. We know nothing about the true probability distribution except $E|Y_1|<\infty$. The parameter of interest is the expectation $\mu_0=EY_1$. From a frequentist perspective, one can always use the arithmetic mean $\hat{\mu}_n=\frac{Y_1+\cdots+Y_n}{n}$ as the estimator because the Law of Large Numbers (LLN) tells us that $\hat{\mu}_n$ is always a consistent estimator no matter what the true probability distribution is. From a Bayesian point of view, however, it becomes complicated in that we have to adopt some (possibly) misspecified proxy model when using Bayes theorem in light of the absence of the true distribution. Suppose in this case we use the normal distribution with mean $\mu$ and variance 1 as proxy model and assume $\mu\sim N(0,1)$, i.e. the prior is the standard normal distribution. The resulting posterior distribution will be
$$
N\left(\frac{n}{n+1}\hat{\mu}_n, \frac{1}{n+1} \right).
$$
From the LLN as $n\rightarrow\infty$,  the posterior will concentrate on $\mu_0$, which is the true value of the parameter. In other words, posterior consistency is achieved by using a normal proxy model, even though this is not the actual likelihood associated with the sample. Even if the data come from the Cantor distribution (cf. e.g. \citet{cantor}), posterior consistency holds despite the fact that the normal proxy model is  not a good approximation in this case because the Cantor distribution does not even possess probability density function. In fact, the KL divergence between the Cantor distribution and the normal proxy model $N(\mu,1)$ is infinity for all $\mu\in\mathbb{R}$ due to the mutual singularity of the two probability measures.

There are some attempts to prove posterior consistency without invoking the KL divergence in the literature. For example, \citet{sriram_et_al} considered Bayesian quantile regression with proxy model being the asymmetric Laplacian distribution and proved consistency without evaluating the KL divergence between the true distribution and the proxy model when the prior is proper. When extending the result to an improper prior, however, they did use the KL assumption mainly to ensure that the posterior was a probability measure. Another example is \citet{syring_et_al} in the context of the quantile estimation in combination with a carefully chosen proxy model, where posterior consistency is achieved without using the KL assumption.

Despite the fact that \citet{shalizi} proved the consistency theorem by evaluating the KL divergence, those techniques  can be adapted so that the evaluation of the KL divergence is no longer required. In this section we derive a version of the consistency theorem of \citet{shalizi} that will then be used to prove posterior consistency for a spectral density estimator in Section~\ref{section:3}, where we
both strengthen and weaken certain assumptions.
For instance, apart from the avoidance of the KL assumption, we consider convergence in probability while \citet{shalizi} adopts almost sure convergence. Also, instead of considering a large parameter space, we confine ourselves to a smaller parameter set on which some desired properties hold.

Let $X_1,X_2, \cdots$, for short $X_1^{\infty}$, be a sequence of random variables on a probability space $(\Omega,\mathcal{F}, P)$, taking values in the measurable space $(\Xi, \mathcal{X})$. The sigma field generated by $X_1^n \coloneqq (X_1,X_2, \cdots, X_n)$ is denoted by $\sigma\left(X_1^n \right)$. Unless otherwise noted, all probabilities are taken with respect to $P$, and $\E[\cdot]$ always means expectation under this probability measure. 

We begin with a prior probability measure $\Pi_0$ over $(\Theta,\mathcal{T})$, and update this using Bayes rule after seeing $x_1^n=X_1^n(\omega)$. This updating is done based on a proxy model via a linking function 
$$
R_n\left(\theta,\omega \right) \coloneqq 
f_{\theta}\left(x_1^n \right),
$$
where $\theta $ is the parameter of interest taking values in the parameter space, a measurable space $(\Theta, \mathcal{T})$.  In classical Bayesian analysis the likelihood function $f_{\theta}(x_1^n)$ is used but here this is not required, rather a quasi-likelihood  or  some exponentiated loss or discrepancy function (see \citealt{bissiri_et_al,syring17}) can be used.
In the next section  we will use a generalization of the Whittle likelihood to this end.

In analogy to classical Bayesian statistics, for any $A \in \mathcal{T}$, we define the so-called (pseudo) posterior measure after seeing $x_1^n=X_1^n(\omega)$ to be
\begin{equation}
\Pi_n\left(A \right)\coloneqq\frac{\int_A R_n\left(\theta, \omega \right) \Pi_0\left( d\theta\right)}{\int_{\Theta} R_n\left(\theta, \omega \right) \Pi_0\left( d\theta\right)}=\frac{\int_A f_{\theta}\left(x_1^n \right) \Pi_0\left( d\theta\right)}{\int_{\Theta} f_{\theta}\left(x_1^n \right) \Pi_0\left( d\theta\right)}.\label{eq:2.1}
\end{equation}
The parameter space can either be finite-dimensional such as $\mathbb{R}^n$ or infinite-dimensional such as $C([0,1])$. 

In the following, we will often suppress the arguments $\theta$ and/or $\omega$ for brevity if this is possible without causing ambiguity.

The assumptions required for the proof of the main theorem, Theorem \ref{thm:2.1}, follow:

\begin{assumption}\label{ass:2.1}
For each $n\in\mathbb{N}$, the proxy model $R_n\left(\theta,\omega \right)$ is positive,  $\mathcal{T}\otimes\sigma\left(X_1^n \right)$ -measurable and,
$$
0<\int_{\Theta} R_n\left(\theta, \omega \right) \Pi_0\left( d\theta\right)<\infty \ \ P\textrm{-}a.s.
$$

\end{assumption}

As already stated we do not require $f_{\theta}(\cdot)$ to be the probability density function of some distribution, rather the inequalities in Assumption \ref{ass:2.1} ensures that the posterior measure $\Pi_n$ is well-defined.

Let $R_n(\theta)$ denotes the random variable $\omega\mapsto R_n(\theta,\omega)$. 

\begin{assumption}\label{ass:2.2}
For each $\theta \in \Theta$ and $n\in\mathbb{N}$, $\E|\ln ( R_n(\theta))|<\infty$ and, 
$$
h\left(\theta \right)\coloneqq -\lim_{n\rightarrow \infty}\frac{1}{n}\E(\ln( R_n(\theta))) 
$$
exists pointwise in $\theta$ such that $h$ is $\mathcal{T}$-measurable. Moreover, $|h(\Theta)|< \infty$, where $h(A)$ denotes the essential infimum of $h$ on $A\in\mathcal{T}$ with respect to $\Pi_0$.
\end{assumption}

The function $h(\cdot)$ defined in Assumption \ref{ass:2.2} can be negative. Note that the definition of $h(\theta)$ is different to  Assumption 2 of  \citet{shalizi} where $h(\theta)$ denotes the KL divergence rate from $P$. 

\begin{assumption}\label{ass:2.3}
As a function of $\omega$,
$$
\sup_{\theta\in\Theta}\left|\frac{1}{n}\ln (R_n\left(\theta,\omega \right))+ h\left(\theta \right)\right|
$$
is $\sigma\left(X_1^n \right)$ -measurable and,
$$
\sup_{\theta\in\Theta}\left|\frac{1}{n}\ln( R_n\left(\theta \right))+ h\left(\theta \right)\right| \overset{P}{\rightarrow}0, \ \ n \rightarrow \infty.
$$
\end{assumption}

Assumption \ref{ass:2.3}  effectively states a Uniform Weak Law of Large Numbers.

Now we state the modified general consistency theorem in which the proof is closely related to Theorem 3 of \citet{shalizi}, and is given in Appendix~\ref{appendix:A}.

\begin{theorem}\label{thm:2.1}
    Suppose that Assumptions \ref{ass:2.1} --\ref{ass:2.3} hold and in addition $h(A) > h(\Theta)$ for any $A \in \mathcal{T}$ such that $\Pi_0(A) > 0$. Then
$$
\Pi_{n}\left(A \right)\overset{P}{\rightarrow}0, \quad n \rightarrow \infty.
$$
\end{theorem}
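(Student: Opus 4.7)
The plan is to adapt the classical ratio-of-integrals argument, replacing Shalizi's KL-based estimates with the uniform log-convergence supplied by Assumption~\ref{ass:2.3}. Write
\[
\Pi_n(A) \;=\; \frac{N_n}{D_n}, \qquad N_n = \int_A R_n(\theta,\omega)\,\Pi_0(d\theta), \qquad D_n = \int_\Theta R_n(\theta,\omega)\,\Pi_0(d\theta),
\]
and show that, with high $P$-probability, $N_n$ decays at exponential rate $h(A)$ while $D_n$ decays no faster than the essential infimum $h(\Theta)$. Since $h(A) > h(\Theta)$, the ratio tends to $0$.

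I would fix $\eta \in (0,(h(A)-h(\Theta))/3)$, which is admissible because $|h(\Theta)|<\infty$ and $h(A)>h(\Theta)$. Let
\[
E_n \;=\; \Bigl\{\,\omega : \sup_{\theta\in\Theta}\bigl|\tfrac1n \ln R_n(\theta,\omega) + h(\theta)\bigr| \le \eta \,\Bigr\}.
\]
Assumption~\ref{ass:2.3} gives $P(E_n)\to 1$, and on $E_n$ we have the two-sided envelope
\[
\exp\!\bigl(-n\bigl(h(\theta)+\eta\bigr)\bigr) \;\le\; R_n(\theta,\omega) \;\le\; \exp\!\bigl(-n\bigl(h(\theta)-\eta\bigr)\bigr)
\]
for every $\theta\in\Theta$ simultaneously.

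For the numerator, the defining property of the essential infimum gives $h(\theta)\ge h(A)$ for $\Pi_0$-a.e.\ $\theta\in A$, so on $E_n$
\[
N_n \;\le\; \Pi_0(A)\,\exp\!\bigl(-n(h(A)-\eta)\bigr).
\]
For the denominator, set $B = \{\theta\in\Theta : h(\theta) < h(\Theta)+\eta\}$; this set is $\mathcal{T}$-measurable since $h$ is, and has $\Pi_0(B)>0$, for otherwise $h(\Theta)+\eta$ would itself be an essential lower bound, contradicting the definition of $h(\Theta)$ as essential infimum. Integrating over $B$ only, on $E_n$,
\[
D_n \;\ge\; \int_B R_n(\theta,\omega)\,\Pi_0(d\theta) \;\ge\; \Pi_0(B)\,\exp\!\bigl(-n(h(\Theta)+2\eta)\bigr).
\]
Combining, on $E_n$,
\[
\Pi_n(A) \;\le\; \frac{\Pi_0(A)}{\Pi_0(B)}\,\exp\!\bigl(-n\bigl(h(A)-h(\Theta)-3\eta\bigr)\bigr),
\]
and the exponent is strictly negative by the choice of $\eta$, so the right-hand side tends to $0$ deterministically. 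Since $P(E_n)\to 1$, this yields $\Pi_n(A)\xrightarrow{P} 0$.

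The only subtlety I expect is the bookkeeping around essential infima: the uniform bound from Assumption~\ref{ass:2.3} holds pointwise in $\theta$, whereas $h(\theta)\ge h(A)$ on $A$ and the positivity of $\Pi_0(B)$ are only $\Pi_0$-a.e.\ statements. This is harmless because both integrals are with respect to $\Pi_0$ and $\Pi_0$-null exceptional sets can be discarded without affecting $N_n$ or $D_n$. The measurability clause in Assumption~\ref{ass:2.3} is what ensures that $E_n$ is genuinely an event, so that $P(E_n)\to 1$ is meaningful; no further machinery (no Fatou-type interchange, no KL estimates) should be necessary.
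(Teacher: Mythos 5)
Your proof is correct, and it arrives at the same exponential decay as the paper, but by a more direct route. The paper follows Shalizi's lemma structure: it first proves a Tonelli/dominated-convergence lemma (Lemma \ref{lemma_A1}) showing that pointwise-in-$\theta$ convergence $P(Q^n_{\theta,\bullet})\to 1$ yields $\Pi_0(Q^n_{\bullet,\omega})\overset{P}{\to}1$, applies it to the one-sided sets $\{\tfrac{1}{n}\ln R_n(\theta,\omega)\geqslant -h(\theta)-\eta\}$ (Lemma \ref{lemma_A2}), and uses this to lower-bound the denominator $\Pi_0(R_n)$ by restricting to $N_{\epsilon/2}\cap Q^n_{\bullet,\omega}(\epsilon/3)$ (Lemma \ref{lemma_A3}); only the numerator bound (Lemma \ref{lemma_A4}) invokes the uniform convergence of Assumption \ref{ass:2.3}. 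You instead use the full uniformity of Assumption \ref{ass:2.3} once: on the single event $E_n$, whose probability tends to one, the two-sided envelope $\exp(-n(h(\theta)+\eta))\leqslant R_n(\theta,\omega)\leqslant \exp(-n(h(\theta)-\eta))$ holds for all $\theta$ simultaneously, which handles numerator and denominator at the same time and makes the Fubini-type lemma unnecessary. Your essential-infimum bookkeeping is exactly right: $h\geqslant h(A)$ holds $\Pi_0$-a.e.\ on $A$, and $\Pi_0(\{h<h(\Theta)+\eta\})>0$ by maximality of the essential infimum (the paper's $N_{\epsilon/2}$ plays the same role, with $\leqslant$ in place of $<$). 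What your version buys is brevity and transparency; what the paper's decomposition buys is modularity and a little robustness, since its denominator bound needs only pointwise (in $\theta$) convergence in probability, so that argument would survive weakening Assumption \ref{ass:2.3} to uniformity over the particular set $A$ alone, closer to Shalizi's original hypotheses. Under the assumption as actually stated, your shortcut is fully valid, the multiplicative constant $\Pi_0(A)/\Pi_0(B)$ is harmless, and your bound also recovers the remark after Theorem \ref{thm:2.1} that $P(\Pi_n(A)\geqslant \exp(-n\alpha))\to 0$ for some $\alpha>0$.
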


The proof of the above theorem shows $P(\Pi_n(A)\ge \exp(-n\alpha))\to 0$ for some $\alpha>0$ as $n\to\infty$.

\section{Bayesian modelling and posterior consistency}\label{section:3}
In this section, we turn back to our original problem of interest and
introduce the assumptions on the observed time series, the proposed approximate likelihood function with its parameter space, as well as the nonparametric prior on the space of spectral density functions.

\subsection{Assumptions on the time series}\label{section:3.1}
Let $\{X_k \}_{k\in \mathbb{Z}}$ be a real-valued, zero-mean strongly stationary time series with finite variance on  $(\Omega,\mathcal{F},P)$. It is assumed that 
\begin{equation}
\E(X_0|\mathcal{U}_{-\infty})=0, \ \E(X_0X_k|\mathcal{U}_{-\infty})=\E(X_0X_k)\eqqcolon\gamma_0(k), \ \forall k \in \mathbb{Z}, \label{eq:3.1}
\end{equation}
where $\mathcal{U}_k=\sigma(X_s,s\leqslant k)$, $\mathcal{U}_{-\infty}=\cap^{\infty}_{k=-\infty}\mathcal{U}_k$ and $\sum_{k\in\mathbb{Z}}|\gamma_0(k)|<\infty$. Then, the spectral density exists, is continuous, $2\pi$-periodic and symmetric at 0, such that it is fully defined if known on the interval $[0,\pi]$. For convenience, we will rescale the spectral density to $[0,1]$, such that  
$$
\varphi_0(x)=\frac{1}{2\pi}\sum_{k\in\mathbb{Z}}\gamma_0(k)\exp\left(-i k\pi x \right)=\frac{1}{2\pi}\sum_{k\in\mathbb{Z}}\gamma_0(k)\cos\left( k\pi x \right), \quad x\in[0,1]\,.
$$
Here the second representation is due to the real and symmetric autocovariance function. The above assumptions also guarantee that a law of large numbers for the quadratic form $(X_1^n)^{\prime}A_n X_1^n$  for all $\|A_n\|_{s}\leqslant1$ (see Corollary 1 in \citet{yaskov}) where $X_1^n$ now denotes the column vector $(X_1,\ldots,X_n)^{\prime}$  and $\|\cdot\|_{s}$ denotes the matrix spectral norm.  This is closely related to spectral density estimation (see \cite{yaskov} for some further discussion) and will also be a key element of the below proof of consistency.

We further assume that
\begin{align}\label{eq_beta_null}
\varphi_0(x)\geqslant \beta \textrm{ for some }\beta>0,
\end{align}
and that $\varphi_0$ is Lipschitz continuous with Lipschitz constant $L(\varphi_0)$, i.e.
$$
L(\varphi_0)\coloneqq\sup_{x,y\in[0,1],x\neq y} \left|\frac{\varphi_0(x)-\varphi_0(y)}{x-y} \right|<\infty.
$$

The assumption of a Lipschitz continuous spectral density is stronger than absolute summability of the autocovariance sequence; see e.g. Bernstein's Theorem \citep[Theorem 6.3]{katznelson}.

However, it is weaker than the assumption made in \citet{choudhuri_et_al} and \citet{kirch_et_al}, that the auto-covariance function $\gamma_0(\cdot)$ satisfies
$$
\sum_{k\in\mathbb{Z}}|k|^{\alpha}|\gamma_0(k)|<\infty \textrm{ for some }\alpha>1.
$$
The latter implies that $\varphi_0$ is differentiable with $(\alpha-1)$-H\"older-continuous derivative (see e.g. Remark 8.5 in page 46 of \citet{serov}), which in turn implies Lipschitz continuity. This stronger assumption is required to show contiguity of the proxy and true likelihood functions, which cannot be expected to hold beyond Gaussian time series. Because we will prove consistency using Theorem~\ref{thm:2.1} above, we no longer require contiguity to hold.

We will present some examples of time series that satisfy all of our assumptions in Section~\ref{section:4}.

\subsection{Proxy model}\label{section:3.2}
The Whittle likelihood \citep{whittle} has been used in a variety of situations in both frequentist and Bayesian time series analysis. Whittle's likelihood assumes that the periodograms are exactly independent and exponentially distributed, which is well known to be true under quite general assumptions in some asymptotic sense. For applications concerning the spectral density alone this approximation is usually good enough, however, higher-order quantities (in the non-Gaussian case) often depend on the information coded in this dependency as observed by \citet{dahlhaus1996frequency}. 
To mitigate this effect \citet{kreiss2003autoregressive} and 
\citet{kirch_et_al} propose a related likelihood that inherits the dependency between periodogram ordinates from a parametric working model in the time domain, while at the same time `nonparametrically correcting' the inherited spectral density coded in the variances of the periodograms. 
In this context Whittle's likelihood becomes a `nonparametrically corrected' version of the Gaussian white noise working model.

Here, we use this corrected Whittle likelihood approximation with a Gaussian mean-zero parametric working model as proxy model.
We assume that the (rescaled) spectral density of the Gaussian parametric working model $\varphi_{\mathrm{par}}(x), \ x \in [0,1]$ is Lipschitz continuous and satisfies
$$
\varphi_{\mathrm{par}}(x)\geqslant \beta_{\mathrm{par}} \textrm{ for some }\beta_{\mathrm{par}}>0
$$
with autocovariance matrix $\Gamma_{\mathrm{par}}=\Gamma_{n,\mathrm{par}}=(\gamma_{\mathrm{par}}(k-j))_{k,j=1}^n$, where $\gamma_{\mathrm{par}}(\cdot)$ is the autocovariance function of the parametric working model.

The corrected Whittle likelihood is then defined as
\begin{equation}
f_{\varphi}(x_1^n)=(2\pi)^{-\frac{n}{2}}\det\left(\Gamma_{\mathrm{par}}^{-1} C_n^2(\varphi)\right)^{\frac{1}{2}}\exp\left(-\frac{1}{2}(x_1^n)^{\prime}C_n(\varphi)\Gamma_{\mathrm{par}}^{-1}C_n(\varphi)x_1^n \right)\label{eq:3.2},
\end{equation}
where $C_n\left(\varphi\right)
=F_n^{\prime}D_n^{-1/2}\left(\frac{\varphi}{\varphi_{\mathrm{par}}}\right)F_n$, $F_n$ is an orthogonal matrix called the real Discrete Fourier Transform matrix (see e.g. formula (4.5.4) of \citet{brockwell_and_davis} and Section 2.1 of \citet{kirch_et_al} for details), $\xi$ is a function on $[0,1]$ and $D_n(\cdot)$ the following corresponding diagonal matrix \begin{equation*}
D_n(\xi)=\begin{cases}
    \textrm{diag}\left(\xi\left(0\right),\xi\left(y_1\right),\xi\left(y_1\right),\cdots,\xi\left(y_N\right),\xi\left(y_N\right),\xi\left(y_{\frac{n}{2}}\right)\right), & \text{$n$ even}.\\
    \textrm{diag}\left(\xi\left(0\right),\xi\left(y_1\right),\xi\left(y_1\right),\cdots,\xi\left(y_N\right),\xi\left(y_N\right)\right), & \text{$n$ odd}.
  \end{cases}
\end{equation*}
where $y_j=\frac{2j}{n},\ j=1,\cdots,N=\left\lfloor\frac{n-1}{2}\right\rfloor$ are scaled Fourier frequencies. The unknown spectral density $\varphi$ is the parameter of interest.

The name {\em corrected} Whittle likelihood comes from the fact that a potentially misspecified parametric working model is nonparametrically corrected in the frequency domain by the matrix $D_n\left(\frac{\varphi}{\varphi_{\mathrm{par}}}\right)$. When a Gaussian white noise model is used as parametric working model, i.e.\ $\varphi_{\mathrm{par}}(x)$ is a constant function, then the corresponding corrected Whittle likelihood coincides with the original Whittle likelihood.

\subsection{Parameter space}

The true value of parameter is often assumed to lie in a compact set when proving consistency results. The parameter space, $\left(C([0,1]), \Vert \cdot \Vert_{\infty} \right)$, is the collection of all continuous functions on $[0,1]$ endowed with the sup-norm $\Vert\cdot \Vert_{\infty}$. It is well-known that the closed unit ball of $C([0,1])$ is not compact \citep[Theorem 12.10.2]{metric_space} and, compact subsets of $C([0,1])$ is obtained by using the following Lipschitz norm $\Vert \cdot \Vert_{\textrm{Lip}}$:

For any $\varphi\in C([0,1])$, the Lipschitz norm is defined by
$$
\Vert \varphi \Vert_{\textrm{Lip}}=\Vert\varphi \Vert_{\infty}+L(\varphi),
$$
with Lipschitz constant of $\varphi$,
$$
L(\varphi)=\sup_{0\leqslant x\neq y \leqslant1}\left|\frac{\varphi(x)-\varphi(y)}{x-y}\right|
\,.$$

The parameter space is defined as
\begin{align}\label{eq_param_space}
\Theta=\left\{\varphi\in C[0,1]: \Vert \varphi \Vert_{\textrm{Lip}}\leqslant M, \quad \inf_{0\leqslant x\leqslant 1}\varphi(x)\geqslant m \right\}
\end{align}
for some suitably chosen values $0<m<M<\infty$ and the true spectral density $\varphi_0$ is contained in $\Theta$. The choices for $m$ and $M$ will be shown later in Assumption~\ref{eq_ass_param_space}.

Let the metric $d_{\infty}(\cdot,\cdot)$ be induced by the sup-norm  $\Vert\cdot\Vert_{\infty}$, 
$$
d_{\infty}(\phi,\psi)=\Vert\phi-\psi \Vert_{\infty}=\sup_{x\in [0,1]}|\phi(x)-\psi(x)|
\,, \,\, \textrm{   for all }\phi,\psi\in\Theta.$$

 In this paper, we will work on the space $\left(\Theta, d_{\infty}(\cdot,\cdot)\right)$ equipped with the Borel $\sigma$-algebra
 $\mathcal{T}$.
 The following proposition shows that this space is a compact metric space and, the proof is given in Appendix~\ref{appendix:B}.

\begin{proposition}\label{proposition:3.1}
$\Theta$, equipped with $d_{\infty}(\cdot,\cdot)$, is a compact metric space.
\end{proposition}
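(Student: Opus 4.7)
The natural route is via the Arzelà–Ascoli theorem: compact subsets of $(C[0,1],\|\cdot\|_\infty)$ are exactly the closed, uniformly bounded, equicontinuous ones. I would first observe that every $\varphi\in\Theta$ satisfies $\|\varphi\|_\infty\le M$, which gives uniform boundedness. Equicontinuity is immediate from the Lipschitz constraint: for any $\varphi\in\Theta$ and any $x,y\in[0,1]$,
\[
|\varphi(x)-\varphi(y)|\le L(\varphi)\,|x-y|\le M\,|x-y|,
\]
so the family $\Theta$ is uniformly Lipschitz with constant $M$, hence equicontinuous. By Arzelà–Ascoli, $\Theta$ is relatively compact in $(C[0,1],\|\cdot\|_\infty)$.

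It then remains to verify that $\Theta$ is closed under $d_\infty$. Let $\varphi_k\in\Theta$ with $\varphi_k\to\varphi$ uniformly on $[0,1]$. Continuity of $\varphi$ follows because uniform limits of continuous functions are continuous. The pointwise lower bound passes to the limit trivially, since $\varphi(x)=\lim_k\varphi_k(x)\ge m$ for every $x$. For the Lipschitz norm, pointwise convergence gives
\[
|\varphi(x)-\varphi(y)|=\lim_{k\to\infty}|\varphi_k(x)-\varphi_k(y)|\le\bigl(\liminf_{k\to\infty}L(\varphi_k)\bigr)\,|x-y|,
\]
so $L(\varphi)\le\liminf_k L(\varphi_k)$, while $\|\varphi\|_\infty=\lim_k\|\varphi_k\|_\infty$ by uniform convergence. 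Combining the two,
\[
\|\varphi\|_{\mathrm{Lip}}=\|\varphi\|_\infty+L(\varphi)\le\liminf_{k\to\infty}\bigl(\|\varphi_k\|_\infty+L(\varphi_k)\bigr)=\liminf_{k\to\infty}\|\varphi_k\|_{\mathrm{Lip}}\le M,
\]
so $\varphi\in\Theta$. Together with the Arzelà–Ascoli step this yields compactness, and $d_\infty$ is a metric on $\Theta$ by restriction, completing the proof.

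The only part that requires any care is the lower-semicontinuity of the Lipschitz constant under uniform convergence, but this is just the exchange of a supremum with a pointwise limit, so there is no real obstacle. Everything else is a direct transcription of the definitions of $\Theta$ and $\|\cdot\|_{\mathrm{Lip}}$ into the hypotheses of Arzelà–Ascoli.
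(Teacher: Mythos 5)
Your proof is correct and follows essentially the same route as the paper: Arzel\`a--Ascoli with uniform boundedness and equicontinuity from the common Lipschitz bound $M$, plus a closedness check in which the pointwise lower bound passes to the limit and the Lipschitz norm is shown to be (lower semi)continuous under uniform convergence. The paper phrases the last step as an exchange of supremum and limit bounded by $\limsup_n\Vert\varphi_n\Vert_{\mathrm{Lip}}$, which is the same argument as your $\liminf$ bound.
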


\subsection{Bernstein-Dirichlet process prior}\label{section:3.4.1}
For the Bayesian analysis, we will use the Bernstein-Dirichlet process prior $\Pi_{BDP}$ proposed by \citet{choudhuri_et_al} for spectral density estimation.

As in
\citet{kirch_et_al} we will not put the prior directly on the spectral density $\varphi\in \Theta$ but rather on the correction $c(x)$, which relates to the spectral density via
\begin{align}\label{eq_connection_spectral_correction}
\varphi(x)=c(x){\varphi_{\mathrm{par}}^\delta(x)}
\end{align}
for some $0\leqslant \delta\leqslant 1$.
This implies that only for $\delta=0$  the Bernstein Dirichlet process prior is actually specified directly for the spectral density $\varphi$ (as in  \citep{choudhuri_et_al}), while for $\delta>0$ the spectral density of the parametric working model provides some pre-smoothing. The latter is important because Bernstein polynomials are better for smoother functions (see e.g. \citealt[Section 3.3]{bernstein} and \citealt[Appendix E]{nonpara_bayesian_inference}). Indeed for an (almost) correctly specified parametric working model a choice close to $\delta=1$ is ideal in terms of smoothness of the correction term while for a completely misspecified working model a choice of $\delta=0$ is better. For this reason, the parameter $\delta$ is called \emph{model confidence parameter}. In this work we assume $\delta$ to be fixed a priori. However, if a uniform prior is put on $\delta$, the MCMC algorithm implemented in the \textsf{beyondWhittle} R-package \citep{meier_et_al} does provide estimates of $\delta$ with values according to the above intuition (see the simulations in \citet{kirch_et_al}).

The Bernstein-Dirichlet process prior approximates the unknown correction function $c$ by Bernstein polynomials,
which  have a uniform approximation property (see e.g. \citealt[p.~20]{Kruijer}):
For any $c \sim \Pi_{BDP}$, define for $x\in[0,1]$
\begin{align}\label{eq_c}
&c(x)=\tau b(x;K,G),\\
&\text{with}\quad b(x;K,G)= b(x;K,w_K(G)):=\sum_{j=1}^{K}{w_{j,K}(G)\beta\left(x;j,K-j+1\right)},\notag
\end{align}
where
 $\beta\left(x;a,b\right)=\frac{\mathrm{\Gamma} (a+b)}{\mathrm{\Gamma}(a)\mathrm{\Gamma}(b)}x^{a-1}{(1-x)}^{b-1}$ are Beta-densities. 
 Priors are then put on both the variance parameter $\tau$,  the order of the polynomial $K$, as well as the weights of the Bernstein polynomial. The weight vector $w_K(G)=\left(w_{1,K}(G),\cdots,w_{K,K}(G) \right)$ is obtained from a Dirichlet process \newline $G \sim \textrm{ Dirichlet Process}(MG_0)$, where $M>0$ and $G_0$ is a probability distribution function on $[0,1]$ via $w_{j,K}(G)=G\left(\left.\frac{j-1}{K},\frac{j}{K}\right]\right.$,  $j=1,2,\cdots,K$. 
Samples from the Dirichlet process can be readily obtained using the truncated stick-breaking representation \cite{JayaramSethuraman1994ACDO}.

The Dirichlet process $G$, the order $K$ as well as the  variance parameter $\tau$ are assumed to be a priori independent, where $K$ has 
a probability mass function, $\rho(k)>0$, for $k=1,2,\cdots$, and $\tau$ is a continuous random variable with support $(0,\infty)$.

Some properties of Bernstein polynomials that are important for the proofs in this work are summarized in Appendix \ref{appendix:B}. 

Clearly, $c \sim \Pi_{BDP}$ puts an implicit prior denoted by $\Pi_C$ onto $\varphi$ via \eqref{eq_connection_spectral_correction}. We will further restrict this prior to the parameter set of interest $\Theta$
as follows:
$$
\Pi_0(A)\coloneqq\frac{\Pi_{C}(A\cap \Theta)}{\Pi_{C}(\Theta)},
$$	
for any $A\in\mathcal{T}$. 	

This is well defined by the next proposition (proven in Appendix~\ref{appendix:C}) which shows prior positivity in any ball around $\varphi_0$ for appropriately chosen $m,M$ in \eqref{eq_param_space}.
\begin{assumption}\label{eq_ass_param_space}
	Let $\inf_{0\leqslant x\leqslant 1} \varphi_0(x)>m$ and 
	$\left\|\frac{\varphi_0}{\varphi_{\mathrm{par}}^{\delta}}\right\|_{\mathrm{Lip}}\cdot \left\|\varphi_{\mathrm{par}}^{\delta}\right\|_{\mathrm{Lip}} <M$.
\end{assumption}

Assumption~\ref{eq_ass_param_space} guarantees that $\varphi$ as in \eqref{eq_connection_spectral_correction} with $c$ as in \eqref{eq_c} that are close enough to $\varphi_0$ in the supremum norm are also contained in $\Theta$. 

\begin{remark}\label{rem_param_space}\begin{enumerate}[(a)]
			\item For $\delta=0$ the second part of Assumption~\ref{eq_ass_param_space} clearly simplifies to $\|\varphi_0\|_{\mathrm{Lip}}<M$. 
			\item If we make the slightly stronger assumption that  $\varphi_0$ and $\varphi_{\mathrm{par}}$ are continuously differentiable, then the second assertion of Assumption~\ref{eq_ass_param_space} can be weakend to $\|\varphi_0\|_{\mathrm{Lip}}<M$ for any $0\leqslant \delta\leqslant 1$. For a proof we refer to Appendix~\ref{appendix:C}.
		\end{enumerate}
\end{remark}

\begin{proposition}\label{proposition:3.2}
	Under Assumption~\ref{eq_ass_param_space} it holds $\Pi_{C}(B(\varphi_0,r))>0$ for all $r>0$, where
\begin{equation}\label{eq:3.3}
B(\varphi_0,r)=\left\{\varphi\in \Theta: \left\Vert \varphi -\varphi_0\right\Vert_{\infty}< r \right\}.
\end{equation}
\end{proposition}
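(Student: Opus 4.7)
The plan is to reduce the problem on $\varphi$ to a standard prior-support statement for the correction $c$, and then exploit the two strict inequalities in Assumption~\ref{eq_ass_param_space} as slack. Set $c_0(x)=\varphi_0(x)/\varphi_{\mathrm{par}}^\delta(x)$, so that $\varphi-\varphi_0=(c-c_0)\varphi_{\mathrm{par}}^\delta$ and hence $\|c-c_0\|_\infty<r/\|\varphi_{\mathrm{par}}^\delta\|_\infty$ already gives $\|\varphi-\varphi_0\|_\infty<r$. For the containment $\varphi\in\Theta$, I would use the standard product rule $\|fg\|_{\textrm{Lip}}\leqslant\|f\|_{\textrm{Lip}}\|g\|_{\textrm{Lip}}$ (which is among the Lipschitz-norm results of Appendix~\ref{appendix:B}): any $c$ whose Lipschitz norm stays strictly below $M/\|\varphi_{\mathrm{par}}^\delta\|_{\textrm{Lip}}$ yields $\|\varphi\|_{\textrm{Lip}}\leqslant M$, while $\inf\varphi\geqslant m$ follows automatically once $r<\inf\varphi_0-m$, which may be assumed since shrinking the ball only strengthens the conclusion. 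Thus the task reduces to producing positive $\Pi_{BDP}$-mass on
\[
\bigl\{c:\|c-c_0\|_\infty<\varepsilon_1,\ \|c\|_{\textrm{Lip}}\leqslant\|c_0\|_{\textrm{Lip}}+\varepsilon_2\bigr\}
\]
for arbitrarily small $\varepsilon_1,\varepsilon_2>0$.

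As centre of such an event I would use the Bernstein polynomial approximation of $c_0$: set $\tau^\ast=\int_0^1 c_0(t)\,dt$, $w^\ast_{j,K}=(\tau^\ast)^{-1}\int_{(j-1)/K}^{j/K}c_0(t)\,dt$, and $b^\ast_K=\tau^\ast\sum_{j=1}^K w^\ast_{j,K}\beta(\cdot;j,K-j+1)$. Rewriting $b^\ast_K$ in the classical Bernstein basis $\{\binom{K-1}{\ell}x^\ell(1-x)^{K-1-\ell}\}_{\ell=0}^{K-1}$ with coefficients $a_\ell=K\int_{\ell/K}^{(\ell+1)/K}c_0(t)\,dt$, the Lipschitz property of $c_0$ gives $|a_{\ell+1}-a_\ell|\leqslant L(c_0)/K$, and the standard derivative bound for Bernstein polynomials yields $L(b^\ast_K)\leqslant (K-1)\max_\ell|a_{\ell+1}-a_\ell|\leqslant L(c_0)$; combined with $\|b^\ast_K\|_\infty\leqslant\|c_0\|_\infty$ this produces $\|b^\ast_K\|_{\textrm{Lip}}\leqslant\|c_0\|_{\textrm{Lip}}$ uniformly in $K$. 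Bernstein's approximation theorem simultaneously gives $\|b^\ast_K-c_0\|_\infty\to 0$, so I fix $K$ large enough that $\|b^\ast_K-c_0\|_\infty<\varepsilon_1/2$.

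Finally I would invoke the usual BDP support properties at this fixed $K$: $\rho(K)>0$ by hypothesis; the prior on $\tau$ puts positive mass in every neighborhood of $\tau^\ast$ because its support is $(0,\infty)$; and the Dirichlet process $\mathrm{DP}(MG_0)$, with $G_0$ of full support on $[0,1]$, assigns positive probability to $\{|G((j-1)/K,j/K]-w^\ast_{j,K}|<\eta,\ j=1,\ldots,K\}$ for every $\eta>0$. Under this joint event, $c=\tau\,b(\cdot;K,w_K)$ differs from $b^\ast_K$ by $O(\eta+|\tau-\tau^\ast|)$ in sup-norm, and because $K$ is now fixed the Beta basis functions and their derivatives form a finite uniformly bounded family, so the Lipschitz norm of the difference is controlled by the same order. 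Choosing $\eta$ and $|\tau-\tau^\ast|$ sufficiently small delivers $\|c-c_0\|_\infty<\varepsilon_1$ and $\|c\|_{\textrm{Lip}}\leqslant\|c_0\|_{\textrm{Lip}}+\varepsilon_2$, exhibiting a positive-probability event contained in $B(\varphi_0,r)\cap\Theta$.

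The main obstacle is the passage from sup-norm closeness to Lipschitz-norm control: plain BDP support yields only the former, and arbitrary weight perturbations can in principle inflate derivatives. The workaround is to fix the Bernstein order $K$ before perturbing, so that the basis functions form a finite smooth family, and to use the strict inequality $\|c_0\|_{\textrm{Lip}}\|\varphi_{\mathrm{par}}^\delta\|_{\textrm{Lip}}<M$ from Assumption~\ref{eq_ass_param_space} as the quantitative slack absorbing those perturbations.
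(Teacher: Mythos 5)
Your proposal is correct and follows essentially the same route as the paper's proof in Appendix~\ref{appendix:C}: you centre the argument at the Kantorovich-type Bernstein approximation of $c_0=\varphi_0/\varphi_{\mathrm{par}}^{\delta}$ (your $b^{\ast}_K$ is exactly $\tau_0 b(\cdot;K,w_K(Q_0))$), fix the order $K$ before perturbing $\tau$ and the Dirichlet-process weights, control the perturbation in both sup- and Lipschitz-norm through the finite Bernstein basis (the paper's Proposition~\ref{proposition:B.2}), prove the uniform-in-$K$ bound $\|b^{\ast}_K\|_{\mathrm{Lip}}\leqslant\|c_0\|_{\mathrm{Lip}}$ exactly as in Proposition~\ref{proposition:B.3}\eqref{proposition:B.3(c)}, and absorb everything into the strict slack of Assumption~\ref{eq_ass_param_space} via the product rule for Lipschitz norms, with the lower bound $m$ handled by shrinking $r$ just as the paper shrinks $\epsilon$. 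The only differences are organizational (you phrase the target event in terms of $c$ rather than $\varphi$), not mathematical.
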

Thus $\Pi_0(\Theta)=1$ and $\Pi_0$ will assign positive probability to any neighbourhood of $\varphi_0$.

\subsection{Posterior consistency}

The posterior with respect to $\Pi_0$ and the proxy likelihood after updating $X_1^n$ is denoted by $\Pi_n$ and is defined in \eqref{eq:2.1}.

The following theorem is the main result of this paper, showing that with increasing sample size the posterior
$\Pi_n$ contracts towards a Dirac measure centred at the true spectral density $\varphi_0$. In particular, any (reasonable) point estimator obtained from the posterior will be a consistent estimator for the true spectral density even for non-Gaussian time series.

\begin{theorem}[Posterior consistency]\label{thm:3.1} 
    Under the assumptions on the time series and the prior in addition to Assumption~\ref{eq_ass_param_space}, 
for any $ r>0$,
$$
\Pi_n(\left\{\varphi\in\Theta: \Vert \varphi - \varphi_0 \Vert_{\infty} \geqslant r \right\})=\Pi_n(B^c(\varphi_0,r)) \overset{P}{\rightarrow} 0, \ t\rightarrow\infty,
$$
where $B^c(\varphi_0,r)$ denotes the complement of $B(\varphi_0,r)$ with respect to $\Theta$ and $B(\varphi_0,r)$ is as in \eqref{eq:3.3}.
\end{theorem}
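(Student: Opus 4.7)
The plan is to apply Theorem~\ref{thm:2.1} with $A = B^c(\varphi_0, r)$. Since \eqref{eq:2.1} gives $\Pi_n(A) = 0$ whenever $\Pi_0(A) = 0$, we may assume $\Pi_0(B^c(\varphi_0,r)) > 0$; it then suffices to verify Assumptions~\ref{ass:2.1}--\ref{ass:2.3} together with the separation condition $h(B^c(\varphi_0,r)) > h(\Theta)$.

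Assumption~\ref{ass:2.1} is immediate: $f_\varphi$ is a positive exponential, and the integral over the compact set $\Theta$ is finite almost surely because the matrix entries involved are uniformly bounded on $\Theta$. To verify Assumption~\ref{ass:2.2} and identify $h$, I decompose
\begin{equation*}
-\frac{1}{n}\ln R_n(\varphi) = \frac{1}{2}\ln(2\pi) - \frac{1}{2n}\ln\det\bigl(\Gamma_{\mathrm{par}}^{-1}C_n^2(\varphi)\bigr) + \frac{1}{2n}(X_1^n)^{\prime} A_n(\varphi) X_1^n,
\end{equation*}
with $A_n(\varphi) = C_n(\varphi)\Gamma_{\mathrm{par}}^{-1}C_n(\varphi)$. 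The log-determinant piece is deterministic: $\tfrac{1}{n}\ln\det\Gamma_{\mathrm{par}}$ converges by Szegő's first theorem, and $\tfrac{1}{n}\ln\det D_n(\varphi/\varphi_{\mathrm{par}})$ is a Riemann sum over the scaled Fourier frequencies $y_j$ that converges to $\int_0^1\ln(\varphi/\varphi_{\mathrm{par}})\,dx$. For the expected quadratic form, $\tfrac{1}{n}\E[(X_1^n)^{\prime}A_n(\varphi)X_1^n] = \tfrac{1}{n}\mathrm{tr}(A_n(\varphi)\Gamma_0)$, and standard Toeplitz asymptotics together with $C_n(\varphi)=F_n^{\prime}D_n^{-1/2}(\varphi/\varphi_{\mathrm{par}})F_n$ yield the limit $\int_0^1 \varphi_0/\varphi\,dx$. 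Collecting the three pieces gives, up to an additive constant independent of $\varphi$,
\begin{equation*}
h(\varphi) = \mathrm{const} + \frac{1}{2}\int_0^1\bigl[\ln\varphi(x) + \varphi_0(x)/\varphi(x)\bigr]\,dx,
\end{equation*}
which is finite and continuous on $(\Theta, d_\infty)$.

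For Assumption~\ref{ass:2.3}, pointwise convergence in probability of $\tfrac{1}{n}\ln R_n(\varphi) + h(\varphi) \to 0$ follows from the preceding deterministic analysis together with Yaskov's law of large numbers for quadratic forms referenced in Section~\ref{section:3.1}: the spectral norm of $A_n(\varphi)$ is bounded uniformly in $n$ and in $\varphi \in \Theta$ thanks to the lower bound $\varphi \geq m$ and the positivity $\beta_{\mathrm{par}} > 0$. To promote pointwise to uniform convergence I use the compactness of $(\Theta, d_\infty)$ established in Proposition~\ref{proposition:3.1} together with the Lipschitz dependence on $\varphi$, in the sup-norm, of each summand in the decomposition: $\tfrac{1}{n}\ln\det D_n(\varphi/\varphi_{\mathrm{par}})$ is deterministically Lipschitz (the derivative of the logarithm is bounded by $1/m$), while the quadratic form is Lipschitz with constant of order $n^{-1}\|X_1^n\|^2$, which stays bounded on an event of high probability since $\tfrac{1}{n}\|X_1^n\|^2 \to \gamma_0(0)$ in probability (Yaskov applied with $A_n = I$). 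A standard finite $\epsilon$-net argument on $\Theta$ then upgrades pointwise to uniform convergence in probability.

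Finally, for the separation, writing $u(x) = \varphi_0(x)/\varphi(x)$,
\begin{equation*}
h(\varphi) - h(\varphi_0) = \frac{1}{2}\int_0^1 \bigl[u(x) - 1 - \ln u(x)\bigr]\,dx \geq 0,
\end{equation*}
since $u - 1 - \ln u \geq 0$ for $u > 0$ with equality iff $u = 1$. For $\varphi \in B^c(\varphi_0,r)$ the continuous functions $\varphi$ and $\varphi_0$ must disagree on a set of positive Lebesgue measure, so $h(\varphi) > h(\varphi_0)$; continuity of $h$ together with compactness of $B^c(\varphi_0,r)$ then gives $\inf_{B^c} h > h(\varphi_0)$. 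Because Proposition~\ref{proposition:3.2} ensures every sup-norm neighbourhood of $\varphi_0$ carries positive $\Pi_0$-mass, the essential infimum of $h$ over $\Theta$ coincides with $h(\varphi_0)$, so $h(B^c(\varphi_0,r)) \geq \inf_{B^c} h > h(\Theta)$, as required. The main obstacle will be the uniform LLN in Assumption~\ref{ass:2.3}: Yaskov's result is only pointwise-in-$\varphi$ convergence in probability, so the bulk of the work lies in establishing the Lipschitz bounds for the quadratic form and the log-determinant, uniformly in $n$, that justify the $\epsilon$-net step.
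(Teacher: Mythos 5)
Your proposal is correct and follows essentially the same route as the paper: apply the modified Shalizi theorem after computing $h(\varphi)$ via Toeplitz/Szeg\H{o} asymptotics, obtain the pointwise law of large numbers from Yaskov's result for quadratic forms, upgrade to uniform convergence on the compact $(\Theta,d_\infty)$ via the Lipschitz-in-$\varphi$ bounds with $O_p(1)$ constant $n^{-1}\sum_k X_k^2$, and conclude the separation $h(B^c(\varphi_0,r))>h(\Theta)$ from the unique minimizer $\varphi_0$, compactness of $B^c(\varphi_0,r)$, and the prior positivity of Proposition~\ref{proposition:3.2}. The only cosmetic deviations are your explicit $\epsilon$-net step (the paper instead invokes Davidson's stochastic equicontinuity theorems) and your Riemann-sum/Szeg\H{o} split of the log-determinant (the paper uses Gray's product theorems), neither of which changes the argument.
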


Before we come to the proof of this theorem in Section~\ref{section:5}, we will present some non-Gaussian time series in which our posterior consistency result is applicable.

\section{Examples}\label{section:4}
To obtain posterior consistency, some assumptions on the underlying time series as outlined in Section~\ref{section:3.1} are required. 
A time series $\{X_n\}_{n\in\mathbb{Z}}$ is called regular \citep[Definition 2.13]{bradley2007introduction}
if its past tail $\sigma$-field $\cap^{\infty}_{n=-\infty}\sigma(X_k,k\leqslant n)$ is (almost) trivial. If the time series is regular, then \eqref{eq:3.1} automatically holds. \citet{peligrad_and_wu} presents several examples of regular processes, including mixing sequences, functions of Gaussian processes, functions of i.i.d. random variables and reversible Markov chains.
Apart from the above assumptions, we also require that the spectral density function of the time series $\varphi_0(\lambda), \ \lambda \in [0,1]$ is strictly positive and Lipschitz continuous. Consequently, any regular time series with strictly positive and Lipschitz continuous spectral density function satisfies all our assumptions. Below are two concrete examples of this category: The first one is non-Gaussian with a singular distribution and the second one has a Lipschitz continuous but not continuously differentiable spectral density. 

\subsection{Causal and invertible ARMA processes}\label{subsection 4.1}
	Causal and invertible ARMA$(p,q)$ model is powerful and popular in time series analysis. As a result of \citet[Theorem 5.22]{pourahmadi}, any such process $\{X_n\}_{n\in\mathbb{Z}}$ is regular. It is well-known that the spectral density function of $\{X_n\}_{n\in\mathbb{Z}}$ is strictly positive and continuously differentiable (cf. e.g. \citealt[Theorem 4.4.2]{brockwell_and_davis}).

\begin{figure}[!htb]\label{fig:Cantor1}
  \centering
  \includegraphics[width=0.8\textwidth]{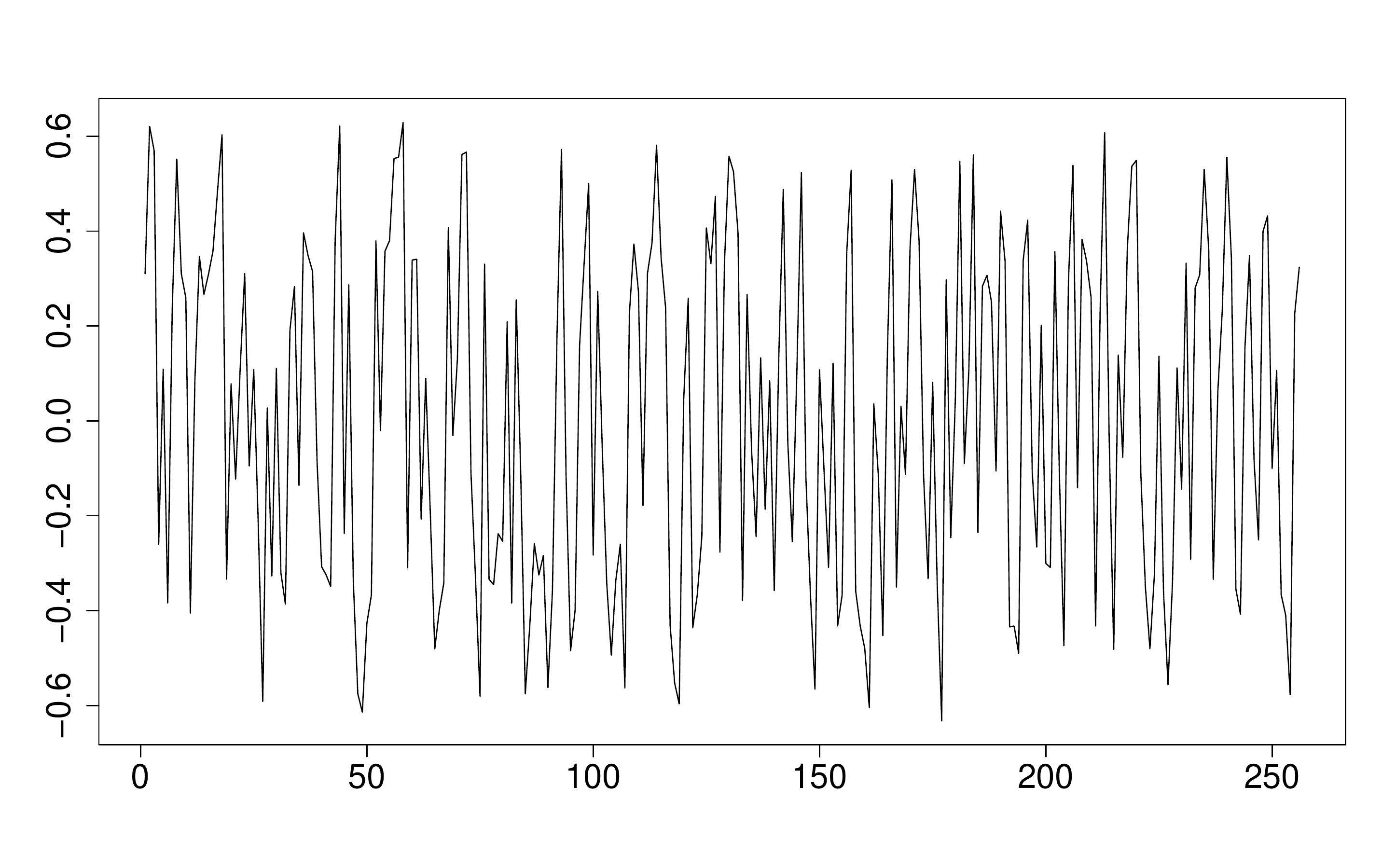}
  \caption{Simulated MA(1) time series with standard Cantor distributed innovations, sample size $n$=256.}
  \label{fig:5_1_trace}
\end{figure}
\begin{figure}[!htb]
  \includegraphics[width=0.8\textwidth]{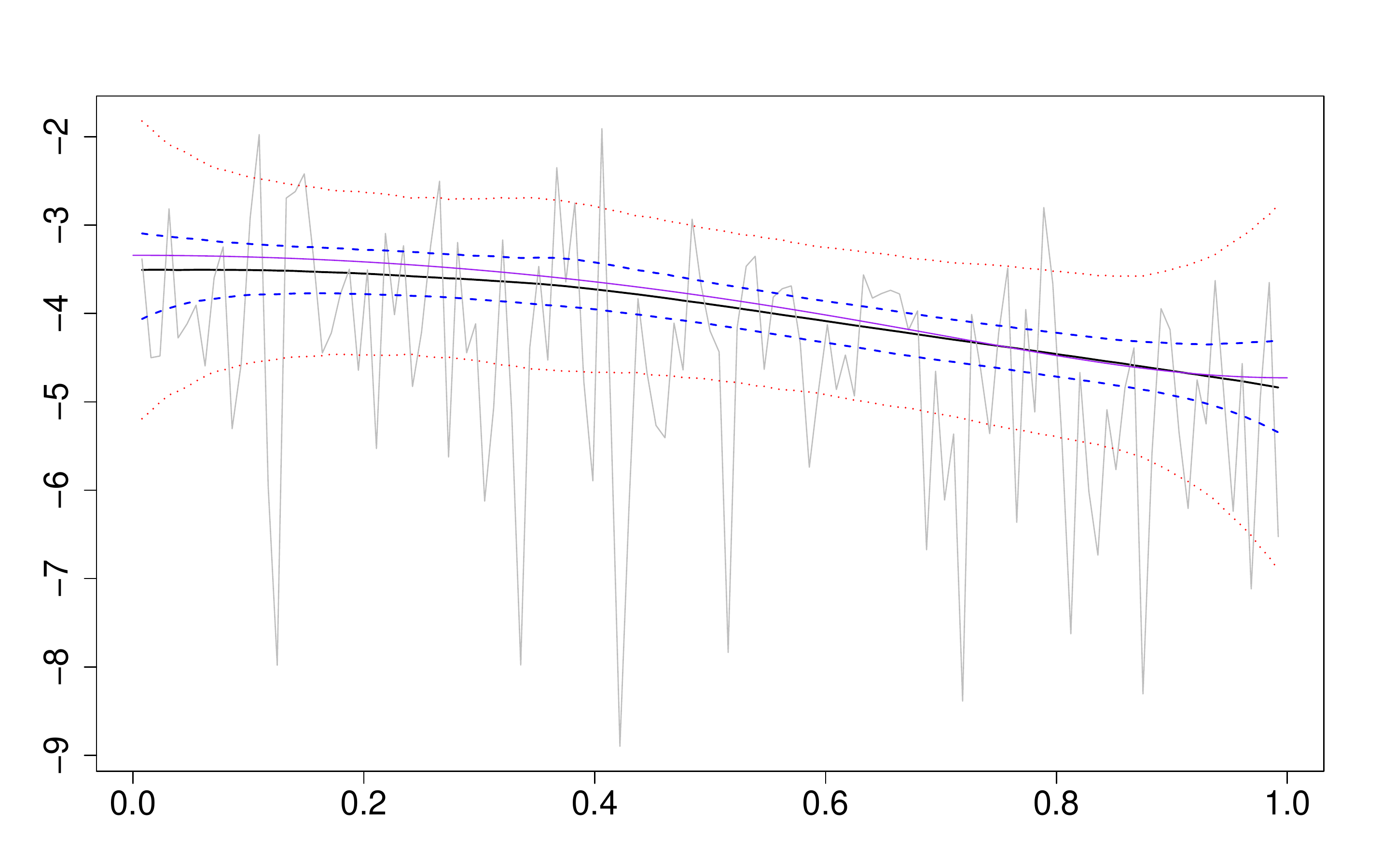}
  \caption{Log-periodogram of the time series in Figure 1, overlaid by the  true log-spectral density functions scaled to $[0,1]$ in solid purple, the estimated log-spectral density in solid black,  pointwise 90\% credible bands in dashed blue, and  uniform 90\% credible bands in dashed red.}
  \label{fig:5_1_spec}
\end{figure}

Some simulation results of posterior consistency for causal and invertible ARMA process with i.i.d non-Gaussian innovation are reported in the supplementary material of \citet{kirch_et_al}, indicating that posterior consistency does hold, which corroborates our finding. Here, we complement these finding with a more extreme innovation sequence, namely an MA$(1)$ process
with i.i.d. innovation obeying a centred standard Cantor distribution (see e.g. \citealt{cantor_function,cantor}):
\[
X_n=\epsilon_{n}+\frac{1}{3}\epsilon_{n-1}
\,, \hspace{.5cm} \epsilon_{k}=\sum_{n=1}^{\infty}\frac{2B_{n,k}}{3^n}-\frac{1}{2},
\]

where $\{B_{n,k}\}_{n,k=1}^{\infty}$ are i.i.d. Bernoulli random variables with expectation $1/2$.  The standard Cantor distribution has a cumulative distribution function which is continuous on $[0,1]$, but not absolutely continuous with respect to the Lebesgue measure \citep[Proposition 2.1]{cantor_function}. Therefore, it is a so-called singular distribution \citep[Remark 2.2]{cantor_function}, a property that is then inherited by $X_n$. Figure \ref{fig:5_1_trace} shows a realization of this time series with length $n=256$. In Figure~\ref{fig:5_1_spec}, the corresponding estimated spectral density function is represented by the black solid line, the blue dashed lines represent 90\% pointwise credible bands, the red dashed lines represent 90\% uniform credible bands and the purple solid line is the true spectral density function. The y-axis in Figure~\ref{fig:5_1_spec} is on the log scale. The corresponding Bayesian analysis via an MCMC algorithm as described in \citet{kirch_et_al} is conducted using the \textsf{beyondWhittle} R package \citep{meier_et_al}.

\subsection{Strong mixing sequences}
A stationary time series $\{X_t \}_{t\in \mathbb{Z}}$ is said to be strong mixing if 
$$
\lim_{n\rightarrow\infty}\alpha(\mathcal{F}_0,\mathcal{F}^n)=0
$$
where $\mathcal{F}_0=\sigma(X_k,k\leqslant0)$, $\mathcal{F}^n=\sigma(X_k,k\geqslant n)$, and $\alpha$ is the strong mixing coefficient defined by
$$
\alpha(\mathcal{A},\mathcal{B})=\sup\left\{|P(A\cap B)-P(A)P(B)|:A\in\mathcal{A},B\in\mathcal{B} \right\}
$$
for  two $\sigma$-algebras $\mathcal{A}$ and $\mathcal{B}$. 

\begin{figure}[!htb]
  \includegraphics[width=0.9\textwidth]{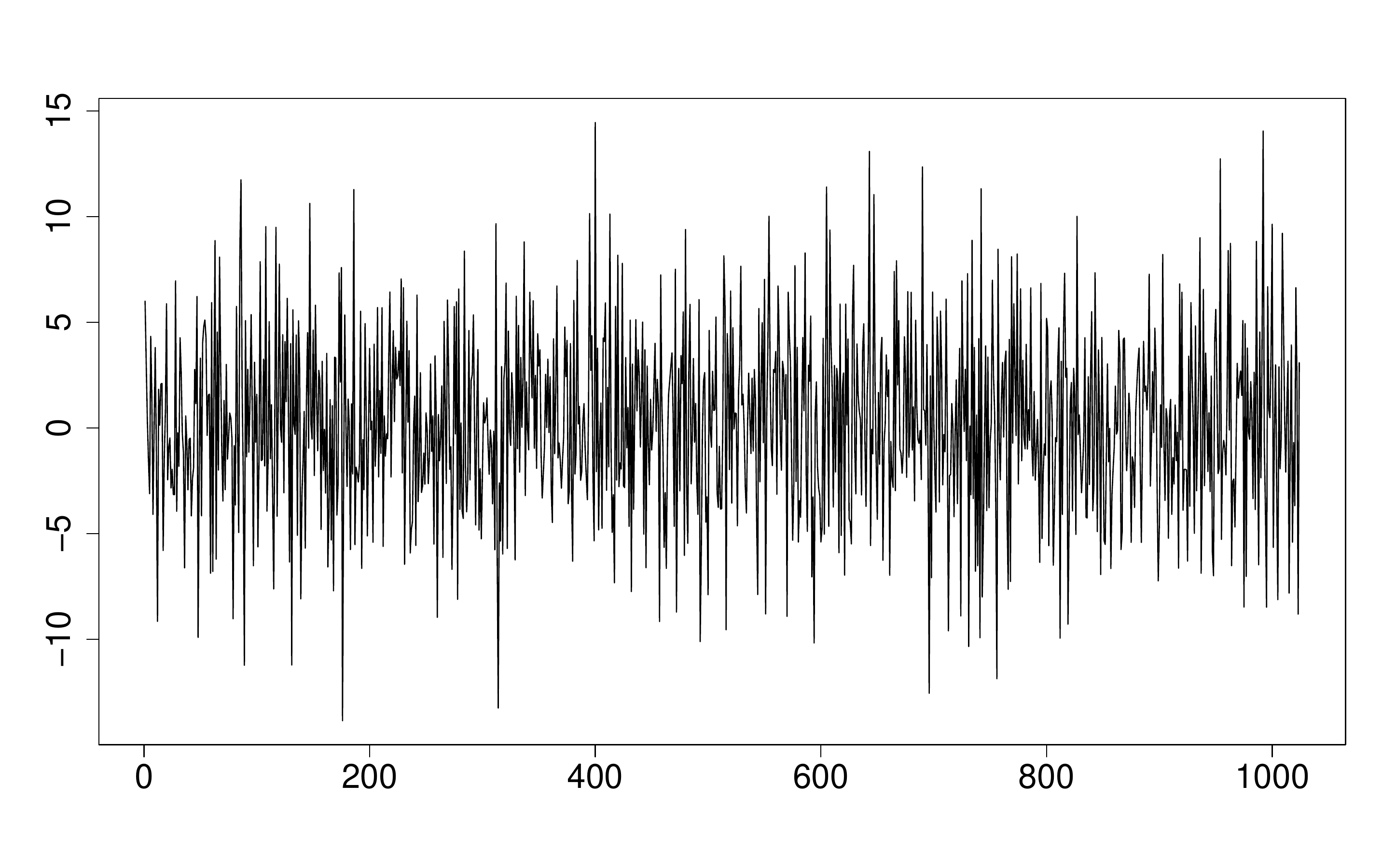}
  \caption{Realization of a Gaussian stationary time series with spectral density function (\ref{eq:psd0}) and sample size $n=1024$.}
  \label{fig:5_2_ts}
\end{figure}
\begin{figure}[h!]
  \includegraphics[width=0.9\textwidth]{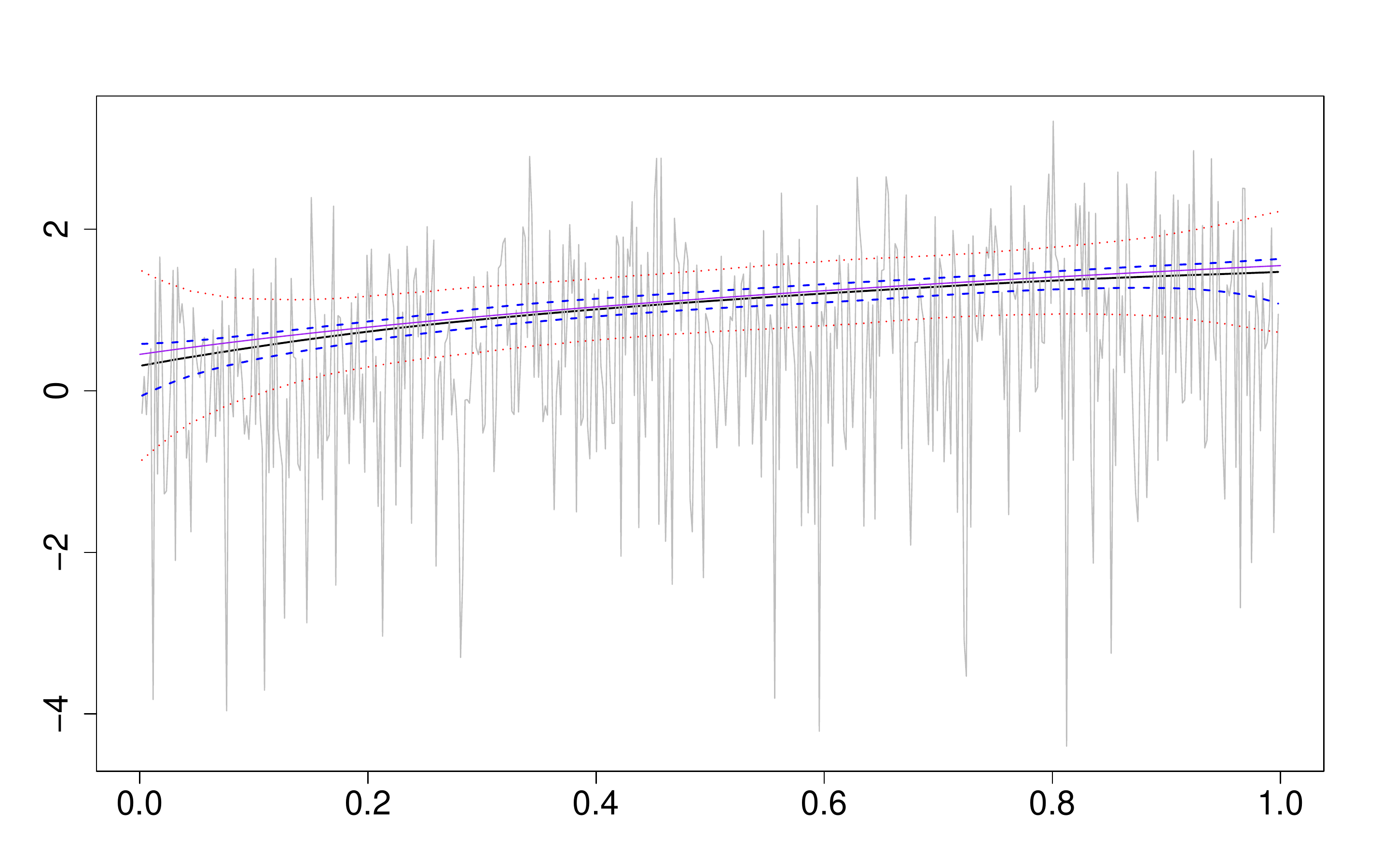}
  \caption{Log-periodogram of the time series in Figure 3, overlaid by true log-spectral density function scaled to $[0,1]$ in solid purple, its estimate in solid black, and pointwise 90\% credible bands in dashed blue, and uniform 90\% credible bands in dashed red.}
  \label{fig:5_2_spec}
\end{figure}

By \citet[Theorem 2.14]{bradley2007introduction}, any strong mixing sequence is also a regular sequence. Therefore, any strong mixing sequence with strictly positive and Lipschitz continuous spectral density function satisfies all our assumptions. Many time series models used in practice, including countable-state ergodic Markov chains \citep{bradley} and some of the ARCH  and GARCH models \citep[Section 3.2]{francq_and_zakoian}, are strong mixing. Specifically, a stationary Gaussian time series with strictly positive and Lipschitz continuous spectral density is also strong mixing \citep[Section 3, Chapter 17]{ibragimov_and_linnik}. Nevertheless, this property does not hold even for some AR(1) models with discrete innovations, see e.g.\ \citet{non_mixing_sequence}.

Figures \ref{fig:5_2_ts}  and \ref{fig:5_2_spec} show the trace plot, the spectral density function and its estimate for a mean-zero Gaussian process with spectral density function 
\begin{equation}\label{eq:psd0}
\varphi(x)=\pi|x|+\frac{\pi}{2}, \quad x\in[0,1].
\end{equation} 
The spectral density $\varphi(x)$ is chosen to be strictly positive and Lipschitz continuous but not continuously differentiable. Therefore, this time series satisfies our assumptions but not the assumptions in \citet{choudhuri_et_al} and \citet{kirch_et_al}. 
In fact, $\varphi_0$ admits the following representation.
$$
\varphi_0(\lambda)=\frac{1}{2\pi}\left(2\pi^2 - 8\sum_{n=1}^{\infty}\frac{\cos(2n-1)\pi x}{(2n-1)^2} \right).
$$
Hence, the auto-covariance sequence is
\begin{equation*}
  \gamma_0(h)=\begin{cases}
    2\pi^2, & \text{if $h=0$}.\\
    0, & \text{if $h$ is non-zero and even}. \\
    -\frac{4}{h^2}, & \text{if $h$ is odd}.
  \end{cases}
\end{equation*}
such that for all $\alpha>1$
$$
\sum_{h\in\mathbb{Z}}|h|^{\alpha}|\gamma_0(h)|=8\sum_{n=1}^{\infty}(2n-1)^{\alpha-2}=\infty.
$$

\section{Proof of the posterior consistency}\label{section:5}

We start with verification of Assumptions \ref{ass:2.1} -- \ref{ass:2.3} then show our main proof of
Theorem \ref{thm:3.1} using Theorem \ref{thm:2.1}.

\subsection*{Verification of Assumption \ref{ass:2.1}}
As a result of Proposition \ref{proposition:3.1}, $(\Theta,d_{\infty})$ is a separable metric space \citep[Lemma 3.26]{aliprantis_and_border}. $\mathcal{T}$ is the corresponding Borel $\sigma$-algebra on $\Theta$. Denote by $\mathcal{B}(\mathbb{R}^n)$ the Borel $\sigma$-algebra on $\mathbb{R}^n$. In order to prove that the proxy model $R_n(\varphi,\omega)=f_{\varphi}(x_1^n)$ defined in \eqref{eq:3.2} is $\mathcal{T}\otimes\sigma\left(X_1^n \right)$-measurable for all $n$, it suffices to prove that for any fixed $n\in\mathbb{N}$, $R_n(\varphi,\omega)$ is a Carath\'eodory function \citep[Lemma 4.51]{aliprantis_and_border}. That is, 
\begin{enumerate}[C.1]
	\item for each $\omega \in \Omega$, $R_n(\cdot,\omega)$ is a continuous functional on $\Theta$,
	\item for each $\varphi \in \Theta$, $R_n(\varphi,\cdot)$ is $\sigma\left(X_1^n \right)$-measurable.
\end{enumerate}
For any fixed $n$ and $\omega$ the corrected likelihood defined in \eqref{eq:3.2} is clearly a continuous functional in $\phi$ with respect to uniform convergence, confirming C.1.

Furthermore, as a composition of $X_1^n$ and an $n$-dimensional normal density function $f_{\varphi}$, clearly 
$R_n(\varphi,\cdot)$ is also $\sigma\left(X_1^n \right)$-measurable, showing C.2.

Next, we will show that the inequalities in Assumption \ref{ass:2.1} are satisfied: Clearly, for each $\varphi\in\Theta$ and each $n\in\mathbb{N}$, $R_n(\varphi,\omega)$ is $P$-a.s. strictly positive noting that $(X_1^n)^{\prime}X_1^n<\infty$ a.s.\ by the existence of second moments. This fact, along with Proposition \ref{proposition:3.2}, implies that
$$
\int_{\Theta} R_n\left(\theta, \omega \right) \Pi_0\left( d\theta\right)>0 \quad P\textrm{ -}a.s.
$$
Furthermore, because $\varphi\in\Theta$ has the uniform lower bound $m$, it holds that $\det(C^2_n(\varphi))\le \frac{1}{m} \det(D_n(\varphi_{\mathrm{par}}))<\infty$ such that
$$
\int_{\Theta} R_n\left(\theta, \omega \right) \Pi_0\left( d\theta\right)
<\infty  \quad P\textrm{ -}a.s.
$$
\subsection*{Verification of Assumption \ref{ass:2.2}}
We will first verify the integrability of $\ln R_n(\varphi)$. By direct calculation, we have
\begin{equation}
\ln R_n(\varphi,\omega) 
= -\frac{n\ln(2\pi)}{2}+\frac{1}{2}\ln\det\left(\Gamma_{\mathrm{par}}^{-1}C_n^2(\varphi) \right) -\frac{1}{2}(x_1^n)^{\prime}C_n(\varphi)\Gamma_{\mathrm{par}}^{-1}C_n(\varphi)x_1^n.\label{eq:4.1}
\end{equation}
Therefore, for each $\varphi\in\Theta$ and each $n\in\mathbb{N}$,
\begin{equation*}
\E|\ln R_n(\varphi)| 
\leqslant \frac{n\ln(2\pi)}{2}+\frac{1}{2}\left|\ln\det\left(\Gamma_{\mathrm{par}}^{-1}C_n^2(\varphi) \right)\right|+\frac{1}{2}\E\left|(X_1^n)^{\prime}C_n(\varphi)\Gamma_{\mathrm{par}}^{-1}C_n(\varphi)X_1^n\right|.
\end{equation*}
Because $C_n(\varphi)$ and $\Gamma_{\mathrm{par}}^{-1}$ are positive definite, we have 
\begin{align*}
\E\left|(X_1^n)^{\prime}C_n(\varphi)\Gamma_{\mathrm{par}}^{-1}C_n(\varphi)X_1^n\right|
&=\E\left[(X_1^n)^{\prime}C_n(\varphi)\Gamma_{\mathrm{par}}^{-1}C_n(\varphi)X_1^n\right]\\
&=\textrm{tr}\left(C_n(\varphi)\Gamma_{\mathrm{par}}^{-1}C_n(\varphi)\Gamma_n \right),
\end{align*}
where $\Gamma_n=\E\left[(X_1^n)(X_1^n)^{\prime}\right]$ is the autocovariance matrix of the time series.

We are now ready to calculate $h(\varphi)$. The functions $\varphi\in\Theta$, $\varphi_0$, $\varphi_{\mathrm{par}}$, $\sqrt{\frac{\varphi_{\mathrm{par}}}{\varphi}}$, $\frac{\varphi}{\varphi_{\mathrm{par}}}$ as well as their products are all Lipschitz continuous due to Proposition \ref{proposition:B.1}. Thus, by the Bernstein's Theorem \citep[Theorem 6.3]{katznelson} they are in the class of functions with absolutely summable Fourier (i.e.\ autocovariance) functions (Wiener algebra). Let $F$ be a continuous function, $f$  from the above class and $\tau_{n,k}$, $k=1,\cdots,n$, be the eigenvalues of an $n\times n$ matrix $T_n$. Furthermore, let $T_n\sim T_n(f)$ in the sense of Section 2.3 in  \citet{gray}, where $T_n(f)$ is the $n\times n$ Toeplitz matrix associated with $f$. Then, the fundamental eigenvalue distribution theorem of Szeg\"o (cf. e.g. \citealt[Theorem 4.2]{gray}), in conjunction with Theorem 2.4 of \citet{gray}, yields
$$
\lim_{n\to\infty}\frac{1}{n}\sum_{k=1}^{n}F(\tau_{n,k})=\int_{0}^{1}F(f(x))dx.
$$

Theorems 5.2 and 5.3 of \citet{gray} imply:
\begin{align*}
&\Gamma_{\mathrm{par}}C_n^{-2}(\varphi)\sim T_n\left(2\pi\varphi_{\mathrm{par}} \frac{2\pi\varphi}{2\pi\varphi_{\mathrm{par}}} \right)=T_n(2\pi\varphi),\\
&C_n(\varphi)\Gamma_{\mathrm{par}}^{-1}C_n(\varphi)\Gamma_n\sim T_n\left(\sqrt{\frac{2\pi\varphi_{\mathrm{par}}}{2\pi\varphi}}\frac{1}{2\pi\varphi_{\mathrm{par}}}\sqrt{\frac{2\pi\varphi_{\mathrm{par}}}{2\pi\varphi}}2\pi\varphi_0\right)=T_n\left(\frac{\varphi_0}{\varphi} \right).
\end{align*}
Thus, a choice of $F(x)=\ln x$ yields
$$
\lim_{n\rightarrow \infty}\frac{\ln\det \left(\Gamma_{\mathrm{par}}C_n^{-2}(\varphi)\right)}{n}= \int_{0}^{1}\ln \varphi(x)dx +\ln(2\pi),
$$
while choosing $F(x)=x$ yields
$$
\lim_{n\rightarrow \infty}\frac{1}{n}\textrm{tr}\left(C_n(\varphi)\Gamma_{\mathrm{par}}^{-1}C_n(\varphi)\Gamma_n \right)=\int_{0}^{1}\frac{\varphi_0(x)}{\varphi(x)}dx.
$$
Consequently, for each $\varphi\in\Theta$, we have by \eqref{eq:4.1}
\begin{align*}
h\left(\varphi \right)
&=-\lim_{n\rightarrow \infty}\frac{1}{n}\E\left[\ln R_n(\varphi) \right] \\
&=\ln(2\pi)+\frac{1}{2}\int_{0}^{1}\ln \varphi_0(x)dx+\frac{1}{2}\int_{0}^{1}\left(\frac{\varphi_0(x)}{\varphi(x)}-\ln\frac{\varphi_0(x)}{\varphi(x)} \right)dx. \numberthis \label{eq:4.2}
\end{align*}
Next, we show that $h$ is a continuous functional on $\Theta$ by verifying the Lipschitz condition. For any $0<u<v$, by the mean value theorem, there exists $w\in(u,v)$ such that
$$
|(u-\ln u)- (v-\ln v)|=\left|1-\frac{1}{w} \right||u-v|\leqslant \frac{v+1}{u}|u-v|.
$$

For all $\xi\in\Theta$, we have $\frac{m}{M}\leqslant\frac{\varphi_0(x)}{\xi(x)}\leqslant \frac{M}{m}$ for all $0\leqslant x\leqslant 1$. Therefore, for any $\phi,\xi\in\Theta$
\begin{align*}
|h(\xi)-h(\phi)|
&\leqslant \int_{0}^{1} \left|\left(\frac{\varphi_0(x)}{\xi(x)}-\ln\frac{\varphi_0(x)}{\xi(x)} \right) - \left(\frac{\varphi_0(x)}{\phi(x)}-\ln\frac{\varphi_0(x)}{\phi(x)} \right) \right|dx \\
&\leqslant \frac{\frac{M}{m}+1}{\frac{m}{M}}\int_{0}^{1} \varphi_0(x)\left|\frac{1}{\xi(x)}-\frac{1}{\phi(x)} \right|dx \\
&=\frac{ \frac{M}{m}+1}{\frac{m}{M}}\int_{0}^{1} \frac{\varphi_0(x)}{\xi(x)\phi(x)}|\xi(x)-\phi(x)|dx \\
&\leqslant \frac{\left(\frac{M}{m}+1\right)M}{\frac{m}{M}m^2} \Vert \xi-\phi \Vert_{\infty}.
\end{align*}
This indicates that $h$ is continuous on $\Theta$, which in turn implies $h$ is $\mathcal{T}$-measurable.

Finally, we will show that $|h|$ is bounded from above: Using the inequality $y-\ln y\geqslant 1$ on $y>0$, we have for all $\xi\in\Theta$,
$$
1\leqslant\int_{0}^{1}\left(\frac{\varphi_0(x)}{\xi(x)}-\ln\frac{\varphi_0(x)}{\xi(x)}\right)dx\leqslant \frac{M}{m}+\ln\frac{M}{m},
$$
such that
$$
\sup_{\xi\in\Theta}|h\left(\xi \right)|\leqslant \ln(2\pi)+\frac{1}{2}\left|\int_{0}^{1}\ln \varphi_0(x)dx\right|+\frac{1}{2}\left(\frac{M}{m}+\ln\frac{M}{m} \right)<\infty.
$$\medskip

\subsection*{Verification of Assumption \ref{ass:2.3}}
Let
$$
l_n(\varphi,\omega)\coloneqq \frac{1}{n}\ln R_n\left(\varphi,\omega \right)+ h\left(\varphi \right).
$$
We first show that for each $\varphi\in\Theta$, $l_n(\varphi) \overset{P}{\rightarrow}0$ as $n \rightarrow \infty$, and then strengthen it to uniform convergence.

Since for any $\varphi\in\Theta$, $\varphi$ and $\varphi_{\mathrm{par}}$ are both lower bounded away from 0 and upper bounded, the spectral norm of $C_n(\varphi)\Gamma_{\mathrm{par}}^{-1}C_n(\varphi)$ is upper bounded uniformly over $n\in\mathbb{N}$ \citep[Theorem 2.1 and Lemma 4.1]{gray}. Therefore, by Corollary 1 of \citet{yaskov}, we have
$$
\left|\frac{1}{n}\left[(X_1^n)^{\prime}C_n(\varphi)\Gamma_{\mathrm{par}}^{-1}C_n(\varphi)X_1^n \right]- \frac{1}{n}E\left[(X_1^n)^{\prime}C_n(\varphi)\Gamma_{\mathrm{par}}^{-1}C_n(\varphi)X_1^n \right]\right| \overset{P}{\rightarrow} 0.
$$
In view of equations \eqref{eq:4.1} and \eqref{eq:4.2}, the above result yields
$
\left|l_n(\varphi)\right| \overset{P}{\rightarrow}0.$

It is straightforward to show that $\sup_{\varphi\in\Theta}\left| l_n(\varphi,\omega)\right|$ is $\sigma\left(X_1^n \right)$ -measurable. Since $R_n(\varphi,\omega)$ is a Carath\'eodory function and $h\left(\varphi \right)$ is $\mathcal{T}$ -measurable, $\left|l_n(\varphi,\omega)\right|$ is also a Carath\'eodory function. According to the measurable maximum theorem \citep[Theorem 18.19]{aliprantis_and_border}, $\sup_{\varphi\in\Theta}\left|l_n(\varphi,\omega)\right|$ is $\sigma\left(X_1^n \right)$ -measurable.

We now show that 
$$
\sup_{\varphi\in\Theta}\left|l_n(\varphi)\right|\overset{P}{\rightarrow}0, \ n\rightarrow\infty.
$$
By Proposition \ref{proposition:3.1}, we know that $\Theta$ is compact with respect to $d_{\infty}$. According to \citet[Theorem 21.9]{davidson}, we can strengthen the pointwise convergence to uniform convergence by showing that $l_n(\varphi)$ is asymptotically uniformly stochastically equicontinuous. In order to prove equicontinuity, it suffices to show that $l_n(\varphi)$ satisfies a Lipschitz-type condition on $\Theta$ \citep[Theorem 21.10]{davidson}. That is, there exists $S\in\mathbb{N}$ such that for any $\phi,\psi\in\Theta$ and $n\geqslant S$,
\begin{equation}
\left|l_n(\phi) -l_n(\psi) \right|\leqslant B_n \Vert\phi-\psi \Vert_{\infty}, \label{eq:4.3}
\end{equation}
where $B_n=O_{p}(1)$.
Since the Lipschitz continuity of $h(\varphi)$ has been verified in the preceding subsection, we now prove that $\frac{1}{n}\ln R_n\left(\theta,\omega \right)$ satisfies \eqref{eq:4.3}.
In light of equation \eqref{eq:4.1}, we have
\begin{align*}
\frac{1}{n}\ln R_n\left(\theta,\omega \right)
&=-\frac{\ln(2\pi)}{2}  - \frac{1}{2}\frac{\ln\det \left(\Gamma_{\mathrm{par}}C_n^{-2}(\varphi)\right)}{n} - \frac{1}{2}\frac{(X_1^n)^{\prime}C_n(\varphi)\Gamma_{\mathrm{par}}^{-1}C_n(\varphi)X_1^n}{n} \\
&=:-\frac{\ln(2\pi)}{2} - \frac{1}{2} l_n^{(1)}(\varphi) - \frac{1}{2} l_n^{(2)}(\varphi,\omega). 
\end{align*}
It suffices to prove that $l_n^{(i)}$, $i=1,2$ satisfy the Lipschitz condition on $\Theta$.

For all $x,y>0$, as per the mean value theorem, there exists $z \in (\min\{x,y\},\max\{x,y\})$ such that
$$
|\ln x - \ln y|=\frac{1}{z}|x-y|\leqslant \frac{1}{\min\{x,y\}}|x-y|.
$$
Consequently, for any $\phi, \xi \in \Theta$, $n \in \mathbb{N}$,
\begin{align*}
&\left|l_n^{(1)}(\phi)-l_n^{(1)}(\xi)\right|
=\frac{1}{n}\left|\ln\det C_n^{-2}(\phi) - \ln\det C_n^{-2}(\xi) \right| =\frac{1}{n}\left|\sum_{j=1}^{N}\left(\ln\phi(y_j)-\ln\xi(y_j) \right) \right| \\
&\leqslant \frac{1}{n}\sum_{j=1}^{N}\frac{1}{\min\{\phi(y_j),\xi(y_j)\}}\left|\phi(y_j)-\xi(y_j)\right| \leqslant \frac{1}{2m}\max_{j=1,2,\cdots,N}\left|\phi(y_j)-\xi(y_j)\right| \\
&\leqslant \frac{1}{2m} \Vert \phi- \xi \Vert_{\infty}.
\end{align*}
Furthermore,
\begin{align*}
&\left|l_n^{(2)}(\phi)-l_n^{(2)}(\xi)\right|
=\frac{1}{n}\left|(X_1^n)^{\prime}\left(C_n(\phi)\Gamma_{\mathrm{par}}^{-1}C_n(\phi) - C_n(\xi)\Gamma_{\mathrm{par}}^{-1}C_n(\xi)\right)(X_1^n) \right| \\
&=\frac{1}{n}\left\Vert(X_1^n)^{\prime}\left(C_n(\phi)\Gamma_{\mathrm{par}}^{-1}C_n(\phi) - C_n(\xi)\Gamma_{\mathrm{par}}^{-1}C_n(\xi)\right)(X_1^n) \right\Vert_s \\
&\leqslant \Vert C_n(\phi)\Gamma_{\mathrm{par}}^{-1}C_n(\phi) - C_n(\xi)\Gamma_{\mathrm{par}}^{-1}C_n(\xi) \Vert_{s} \frac{\sum_{k=1}^n X_k^2}{n},
\end{align*}
where $\Vert \cdot \Vert_s$ denotes the matrix spectral norm and the last inequality is due to the submultiplicativity of the spectral norm (see e.g. \citealt[formula (2.3.3)]{matrix_comp}).
By the triangle inequality, we have
\begin{align*}
&\Vert C_n(\phi)\Gamma_{\mathrm{par}}^{-1}C_n(\phi) - C_n(\xi)\Gamma_{\mathrm{par}}^{-1}C_n(\xi) \Vert_{s} \\
&\leqslant \Vert (C_n(\phi)-C_n(\xi))\Gamma_{\mathrm{par}}^{-1}C_n(\phi)\Vert_{s} + \Vert C_n(\xi)\Gamma_{\mathrm{par}}^{-1}(C_n(\phi)-C_n(\xi)) \Vert_{s}.
\end{align*}
For the first term of the right hand side, again by the submultiplicativity of the spectral norm, we get
$$
\Vert (C_n(\phi)-C_n(\xi))\Gamma_{\mathrm{par}}^{-1}C_n(\phi)\Vert_{s}\leqslant \Vert C_n(\phi)-C_n(\xi) \Vert_s \Vert \Gamma_{\mathrm{par}}^{-1} \Vert_s \Vert C_n(\phi) \Vert_s.
$$
According to \citet[Theorem 2.1 and Lemma 4.1]{gray}, $\Vert \Gamma_{\mathrm{par}}^{-1} \Vert_s \leqslant \frac{1}{\beta_{\mathrm{par}}}$.
By direct calculation, we have
\begin{align*}
\Vert C_n(\phi)-C_n(\xi) \Vert_s 
&= \max_{j=1,\cdots,N}\left|\varphi_{\mathrm{par}}^{\frac{1}{2}}(y_j)\left(\phi^{-\frac{1}{2}}(y_j) - \xi^{-\frac{1}{2}}(y_j) \right) \right| \\
&=\max_{j=1,\cdots,N}\frac{\varphi_{\mathrm{par}}^{\frac{1}{2}}(y_j)}{\phi^{\frac{1}{2}}(y_j)\xi^{\frac{1}{2}}(y_j)\left(\xi^{\frac{1}{2}}(y_j) + \phi^{\frac{1}{2}}(y_j)\right)}\left|\xi(y_j) - \phi(y_j) \right| \\
&\leqslant \frac{\Vert \varphi_{\mathrm{par}}^{\frac{1}{2}} \Vert_{\infty}}{2m^{\frac{3}{2}}} \Vert \xi - \phi \Vert_{\infty}
\end{align*}
and
$$\Vert C_n(\phi) \Vert_s = \max_{j=1,\cdots,N} \frac{\varphi_{\mathrm{par}}^{\frac{1}{2}}(y_j)}{\phi^{\frac{1}{2}}(y_j)}\leqslant \frac{\Vert \varphi_{\mathrm{par}}^{\frac{1}{2}} \Vert_{\infty}}{m}.
$$
Therefore,
$$
\Vert (C_n(\phi)-C_n(\xi))\Gamma_{\mathrm{par}}^{-1}C_n(\phi)\Vert_{s}\leqslant \frac{\Vert \varphi_{\mathrm{par}} \Vert_{\infty}}{2\beta_{\mathrm{par}} m^{\frac{5}{2}}}\Vert \xi - \phi \Vert_{\infty}.
$$
By the same token, 
$$
\Vert C_n(\xi)\Gamma_{\mathrm{par}}^{-1}(C_n(\phi)-C_n(\xi))\Vert_{s}\leqslant \frac{\Vert \varphi_{\mathrm{par}} \Vert_{\infty}}{2\beta_{\mathrm{par}} m^{\frac{5}{2}}}\Vert \xi - \phi \Vert_{\infty}.
$$
Putting things together, we have
$$
\left|l_n^{(2)}(\phi)-l_n^{(2)}(\xi)\right|\leqslant \frac{\sum_{k=1}^n X_k^2}{n}  \frac{\Vert \varphi_{\mathrm{par}} \Vert_{\infty}}{\beta_{\mathrm{par}} m^{\frac{5}{2}}}\Vert \xi - \phi \Vert_{\infty}=O_p(1)\Vert \xi - \phi \Vert_{\infty},
$$
where the last equation is due to Corollary 1 of \citet{yaskov}.

Since $l_n^{(i)}$, $i=1,2$ satisfy the Lipschitz condition, we have shown that $l_n=-\frac{\ln(2\pi)}{2}- \frac{1}{2}l_n^{(1)} -\frac{1}{2} l_n^{(2)} + h$ does indeed satisfy the Lipschitz-type condition \eqref{eq:4.3}. Therefore, according to \citet[Theorem 21.10]{davidson}, $l_n$ is asymptotically uniformly stochastically equicontinuous in probability over $\Theta$.\medskip

\subsection*{Proof of posterior consistency}
Our goal is to show that for any $r>0$, $\Pi_n(B^c(\varphi_0,r))\rightarrow 0$ in $P$-probability. 
If $\Pi_0(B^c(\varphi_0,r))= 0$, then clearly we have $\Pi_n(B^c(\varphi_0,r))= 0$ for any $n\in\mathbb{N}$.
If $\Pi_0(B^c(\varphi_0,r))> 0$, then according to Theorem \ref{thm:2.1}, we only need to show
$$
h\left(B^c(\varphi_0,r) \right)=\underset{\varphi\in B^c(\varphi_0,r)}{\textrm{ess inf }}h(\varphi) > h(\Theta) = \underset{\varphi\in \Theta}{\textrm{ess inf }}h(\varphi),
$$
where here and throughout the essential infimum is with respect to the prior measure $\Pi_0$.

First, notice that the continuity of $h$ on $\Theta$ has been proved when verifying Assumption 2.2. We will now focus on proving that $h$ has a unique minimum point at $\varphi_0$. Since for any $y>0$, it holds $y-\ln y - 1\geqslant 0$ with equality if and only if $y=1$, and $\Theta$ is a collection of continuous functions on $[0,1]$, we have for any $\varphi\in\Theta$,
$$
\frac{\varphi_0}{\varphi}-\ln\frac{\varphi_0}{\varphi} - 1
$$
is a non-negative continuous function on $[0,1]$. Hence,
$$
\int_0^1 \left(\frac{\varphi_0(x)}{\varphi(x)}-\ln\frac{\varphi_0(x)}{\varphi(x)} - 1 \right)dx \geqslant 0,
$$
where the equality holds if and only if $\varphi=\varphi_0$. By virtue of equation \eqref{eq:4.2}, we get
$$
\varphi_0=\underset{\varphi\in\Theta}{\mathrm{argmin}} \ h(\varphi).
$$

Now we will prove that $h(\Theta)=h(\varphi_0)$. For any $z>h(\varphi_0)$, by the continuity of $h$, there exists $\delta>0$ such that $h(\varphi_0)\leqslant h(\varphi) < z$, for all $\varphi\in B(\varphi_0,\delta)$. Consequently, $\Pi_0(\left\{\varphi\in\Theta:h(\varphi)<z \right\})\geqslant\Pi_0(B(\varphi_0,\delta))>0$ by Proposition~\ref{proposition:3.2}  showing that  indeed $h(\Theta)=h(\varphi_0)$.

Finally, we will show that $h\left(B^c(\varphi_0,r) \right)> h(\Theta)$ for all $r>0$. It is clear that $B^c(\varphi_0,r)$, as a closed subset of a compact set $\Theta$  is also compact. Since $h$ is continuous on $B^c(\varphi_0,r)$, it can attain its local minimum on this set. i.e.
$$
h\left(B^c(\varphi_0,r) \right)\geqslant \min_{\varphi\in B^c(\varphi_0,r)}h(\varphi)>h(\varphi_0)=h(\Theta).
$$
Theorem~\ref{thm:3.1} now follows from Theorem~\ref{thm:2.1}.

\section{Discussion and outlook}\label{section:6}
The general consistency theorem introduced in this paper does not rely on the KL formulation which is otherwise popular in the literature. It has the capability to deal with situations where the proxy model does not approximate the data generating process sufficiently well with respect to the  KL divergence.  Examples of such time series with singular marginal distributions are presented in subsection \ref{subsection 4.1}. Since the proxy model in this case corresponds to a non-degenerate Gaussian probability measure, the Radon-Nikodym derivative and consequently the KL divergence between these two measures is not defined. On the other hand, these time series meet the  assumptions of this paper so that the proposed methodology  is  applicable. Moreover, the proxy model is not required to be tied to a probability distribution, it has the potential to deal with generalized Bayesian updating such as Gibbs posterior distributions.

The main objective in this paper was to make as weak assumptions on the time series as possible. Indeed posterior consistency is proven without invoking stronger assumptions such as the existence of higher order cumulants and certain complicated dependence structures which are commonly seen in the literature on nonparametric Bayesian (and frequentist) spectral density estimation (e.g. \citealt{choudhuri_et_al, pawitan_and_osullivan1994, kakizawa2006}). 

Mild assumptions on the time series structure, however, require stronger assumptions elsewhere. Similar to (3.2) of \citet{meier_et_al2020} and (15) of \citet{gao_and_zhou2016}, the parameter space in this paper is chosen to be compact under a pre-specified topology which is a common assumption. This restriction may be relaxed by using a sieve defined on the parameter space of positive continuous functions on [0,1] endowed with the sup-norm. Nevertheless, this comes at the cost of stronger assumptions on the time series. To elaborate, suppose we use $\Theta=\{f\in C([0,1]):f>0 \}$ as the parameter space and a suitable sequence of sets $\Theta_n=\{f\in C([0,1]):f\geqslant m_n, \left\Vert f \right\Vert_{\mathrm{Lip}}\leqslant M_n \}$ that approximate $\Theta$ well for growing $n$  where the covering number of  $\Theta_n$ needs to be controlled. The latter often relies on some sort of concentration inequality which in turn requires the existence of higher order moments and certain dependence structures (e.g. linear process, mixing condition, etc.). These are exactly the kind of assumptions which we aim to avoid. Such concentration inequalities are also typically required to obtain posterior contraction rates.

It is of future interest to establish a posterior contraction rate under minimal constraints on the time series structure. Another direction of future research, which may be of practical interest, is to generalize the results to priors other than the Bernstein-Dirichlet process prior used in this paper. As the proof  relies mainly on the uniform approximation property of the Bernstein polynomials, we conjecture that it can be generalized as long as the  basis functions have the uniform approximation property. Some examples are priors based on B-spline (\citealt{edwards_et_al2019} and \citealt{belitser_and_serra2014}) and wavelet basis \citep{gine_and_nickl2011} functions. Furthermore, our future research will also aim to extend the consistency proof to the estimation of the Hermitian positive definite spectral density matrix of a non-Gaussian multivariate time series using a matrix-Gamma process prior \citep{Alex-thesis}.

\appendix

\section{Proof of Theorem 2.1}\label{appendix:A}
The proofs in this section are closely related to the proofs for Lemmas 1, 2, 3, 6 and Theorem 1 in \citet{shalizi}.
\begin{lemma}\label{lemma_A1}
	Let $Q^n \subseteq \Theta \times \Omega$, $n=1,2,\cdots$ be a sequence of jointly measurable sets, with sections $Q_{\theta,\bullet}^n = \{\omega \in \Omega : (\theta, \omega) \in Q^n\}$ and $Q_{\bullet,\omega}^n = \{\theta \in \Theta : (\theta, \omega) \in Q^n\}$.

	If, for some probability measure $P$ on $\Omega$,
$$
\lim_{n \rightarrow \infty} P\left(Q_{\theta,\bullet}^n \right)=1, \ \forall \theta \in \Theta.
$$
Then for any probability measure $\Pi$ on $\Theta$,
$$
\Pi\left(Q_{\bullet,\omega}^n \right) \overset{P}{\rightarrow} 1.
$$
\begin{proof}
As in the proof of Lemma 1 in \citet{shalizi}, we obtain by Tonelli's theorem
$$
\int_{\Theta} P(Q_{\theta,\bullet}^n)\Pi(d\theta))=\E\Pi(Q_{\bullet,\omega}^n),
$$
such that by the dominated convergence theorem it holds
\begin{equation*}
\lim_{n \rightarrow \infty} \E|\Pi(Q_{\bullet,\omega}^n)-1|
=1-\int_{\Theta}\lim_{n \rightarrow \infty}P(Q_{\theta,\bullet}^n)\Pi(d\theta)
=0,
\end{equation*}
which implies
$$
\Pi\left(Q_{\bullet,\omega}^n \right) \overset{P}{\rightarrow} 1.
$$
\end{proof}

\end{lemma}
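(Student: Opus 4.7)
The plan is to reduce the statement to an $L^1(P)$ convergence and then pass to convergence in probability. First, because $Q^n \in \mathcal{T} \otimes \mathcal{F}$ is jointly measurable, Fubini--Tonelli guarantees that for each $\theta$ and each $\omega$ the sections $Q^n_{\theta,\bullet}$ and $Q^n_{\bullet,\omega}$ are measurable in their respective $\sigma$-fields, and moreover that the slice-probabilities $\theta \mapsto P(Q^n_{\theta,\bullet})$ and $\omega \mapsto \Pi(Q^n_{\bullet,\omega})$ are themselves measurable. Applied to $\mathbbm{1}_{Q^n}$, Tonelli yields the key identity
$$
\int_{\Theta} P(Q^n_{\theta,\bullet})\, \Pi(d\theta) \;=\; \int_{\Omega} \Pi(Q^n_{\bullet,\omega})\, P(d\omega) \;=\; \E\,\Pi(Q^n_{\bullet,\omega}).
$$

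Second, I would control the left-hand side by dominated convergence. The hypothesis gives $P(Q^n_{\theta,\bullet}) \to 1$ for every $\theta \in \Theta$, and the integrands are uniformly bounded by $1$, which is trivially $\Pi$-integrable because $\Pi$ is a probability measure. Consequently the left-hand side tends to $\int_\Theta 1\, \Pi(d\theta) = 1$, so combining with the Tonelli identity gives $\E\,\Pi(Q^n_{\bullet,\omega}) \to 1$.

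Third, since $\Pi(Q^n_{\bullet,\omega})$ takes values in $[0,1]$, the absolute deviation from $1$ is simply $1 - \Pi(Q^n_{\bullet,\omega})$, and therefore
$$
\E\, \bigl| 1 - \Pi(Q^n_{\bullet,\omega}) \bigr| \;=\; 1 - \E\,\Pi(Q^n_{\bullet,\omega}) \;\longrightarrow\; 0,
$$
which is $L^1(P)$ convergence of the random variables $\Pi(Q^n_{\bullet,\omega})$ to the constant $1$. Markov's inequality then upgrades this to convergence in $P$-probability, giving the desired conclusion $\Pi(Q^n_{\bullet,\omega}) \overset{P}{\to} 1$.

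There is no genuine difficulty in this argument: it is a standard swap-of-integrals followed by dominated convergence, essentially the same device used by \citet{shalizi} in his Lemma 1, with the minor streamlining that one reads off $L^1$ convergence directly instead of going through almost sure convergence (which better matches the in-probability conclusion used later in the paper). The only point that warrants attention is verifying that \emph{joint} measurability of $Q^n$ really does suffice both for the sections to be measurable and for the two marginal probability functions to be measurable in the variables over which they are integrated; this is exactly why the lemma is phrased with joint rather than sectional measurability as its hypothesis.
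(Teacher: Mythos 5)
Your argument is correct and is essentially identical to the paper's own proof: both use Tonelli's theorem to identify $\int_\Theta P(Q^n_{\theta,\bullet})\,\Pi(d\theta)$ with $\E\,\Pi(Q^n_{\bullet,\omega})$, apply dominated convergence to conclude $\E|1-\Pi(Q^n_{\bullet,\omega})|\to 0$, and pass from $L^1(P)$ convergence to convergence in probability. No substantive difference.
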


\begin{lemma}\label{lemma_A2}
	Under Assumptions \ref{ass:2.1}, \ref{ass:2.2} and \ref{ass:2.3}, for any $\eta>0$ it holds $\Pi_0\left(Q_{\bullet,\omega}^n(\eta) \right) \overset{P}{\rightarrow} 1$, where $Q_{\bullet,\omega}^n(\eta)$ is the $\omega$-section of
$$
Q^n(\eta)\coloneqq\left\{(\theta, \omega) \in \Theta \times \Omega:\frac{1}{n}\ln R_n(\theta,\omega) \geqslant -h(\theta)-\eta \right\}.
$$

\begin{proof}
For any $\eta >0$ and $n\in\mathbb{N}$,
$Q^n(\eta)$
is jointly measurable according to Assumptions \ref{ass:2.1} and \ref{ass:2.2}. Furthermore, Assumption \ref{ass:2.3} implies that for all $\theta\in\Theta$
$$
\lim_{n \rightarrow \infty}P\left(Q_{\theta,\bullet}^n (\eta) \right)=1,
$$
such that by Lemma \ref{lemma_A1}, we have
$$
\Pi_0\left(Q_{\bullet,\omega}^n(\eta) \right) \overset{P}{\rightarrow} 1.
$$
\end{proof}
\end{lemma}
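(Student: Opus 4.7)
The plan is to apply Lemma~\ref{lemma_A1} directly, with the probability measure $\Pi$ taken to be $\Pi_0$ and the sets $Q^n$ taken to be $Q^n(\eta)$. So I need to verify the two hypotheses of Lemma~\ref{lemma_A1}: joint measurability of $Q^n(\eta)$ and pointwise-in-$\theta$ convergence $P(Q^n_{\theta,\bullet}(\eta)) \to 1$.

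For joint measurability, Assumption~\ref{ass:2.1} tells us that $(\theta,\omega) \mapsto R_n(\theta,\omega)$ is $\mathcal{T}\otimes \sigma(X_1^n)$-measurable and positive, so $(\theta,\omega)\mapsto \frac{1}{n}\ln R_n(\theta,\omega)$ is jointly measurable. Assumption~\ref{ass:2.2} gives that $\theta \mapsto h(\theta)$ is $\mathcal{T}$-measurable, hence trivially $\mathcal{T}\otimes\sigma(X_1^n)$-measurable as a function of $(\theta,\omega)$. Therefore the function $(\theta,\omega)\mapsto \frac{1}{n}\ln R_n(\theta,\omega) + h(\theta) + \eta$ is jointly measurable, and $Q^n(\eta)$ is the preimage of $[0,\infty)$ under this map, which shows joint measurability.

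For the pointwise convergence, fix $\theta \in \Theta$. Assumption~\ref{ass:2.3} asserts that $\sup_{\theta'\in\Theta}\bigl|\frac{1}{n}\ln R_n(\theta',\omega) + h(\theta')\bigr| \to 0$ in $P$-probability, and dropping the supremum yields in particular that $\frac{1}{n}\ln R_n(\theta,\omega) + h(\theta) \to 0$ in $P$-probability for this fixed $\theta$. Consequently,
$$
P\!\left(\frac{1}{n}\ln R_n(\theta,\omega) + h(\theta) \geqslant -\eta\right) \to 1, \qquad n \to \infty,
$$
which is precisely $P(Q^n_{\theta,\bullet}(\eta)) \to 1$.

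With both hypotheses verified, an application of Lemma~\ref{lemma_A1} with $\Pi = \Pi_0$ yields $\Pi_0(Q^n_{\bullet,\omega}(\eta)) \overset{P}{\to} 1$, which is exactly the claim. I do not anticipate any real obstacle: the argument is essentially bookkeeping — identifying the right measurability statements to extract from Assumptions~\ref{ass:2.1}–\ref{ass:2.2}, and observing that the uniform convergence in Assumption~\ref{ass:2.3} trivially implies the required pointwise convergence for invoking the preceding lemma. The only care needed is to confirm that the joint measurability really does follow from the Carath\'eodory-style assumptions already made, but this is immediate once both $\ln R_n$ and $h$ are noted to be measurable on their respective factors.
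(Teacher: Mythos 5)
Your proposal is correct and follows exactly the same route as the paper's proof: establish joint measurability of $Q^n(\eta)$ from Assumptions \ref{ass:2.1} and \ref{ass:2.2}, deduce the pointwise convergence $P(Q^n_{\theta,\bullet}(\eta))\to 1$ from the uniform convergence in Assumption \ref{ass:2.3}, and conclude via Lemma \ref{lemma_A1}. The paper states these steps more tersely; you have simply spelled out the same bookkeeping in more detail.
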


\begin{lemma}\label{lemma_A3}
Under Assumptions \ref{ass:2.1}, \ref{ass:2.2} and \ref{ass:2.3}, for any $\epsilon > 0$,
$$
\lim_{n \rightarrow \infty} P\left(\left\{\omega\in\Omega: \Pi_0\left(R_n \right)> \exp\left\{-n\left(h\left(\Theta \right)+\epsilon\right)\right\} \right\}\right) = 1,
$$
where $\Pi_0\left(R_n \right)\coloneqq\int_{\Theta}R_n(\theta,\omega)\Pi_0(d\theta)$.
\begin{proof}
For any $\omega \in \Omega$, we have
\begin{equation*}
\Pi_0 (R_n) > \exp\left\{-n\left(h\left(\Theta \right)+\epsilon\right)\right\}
\Leftrightarrow
\int_{\Theta} \exp\left\{n\left(h\left(\Theta \right)+\epsilon+  \frac{\ln R_n(\theta)}{n}\right)\right\}  \Pi_0(d\theta) > 1.
\end{equation*}
Consequently, it is sufficient to prove
\begin{align}\label{eq_lemma_a3_1}
\lim_{n \rightarrow \infty} P\left(\left\{\omega\in\Omega: \int_{\Theta} \exp\left\{n\left(h\left(\Theta \right)+\epsilon+  \frac{\ln R_n(\theta)}{n}\right)\right\}  \Pi_0(d\theta) > 1 \right\}\right)=1.
\end{align}
For $N_{\frac{\epsilon}{2}}=\{\theta: h(\theta) \leqslant h(\Theta)+\frac{\epsilon}{2} \}$ it holds for any $\theta\in N_{\frac{\epsilon}{2}}\cap Q_{\bullet,\omega}^n\left(\frac{\epsilon}{3}\right)$  by definition of these sets
$$
\frac{\ln R_n(\theta,\omega)}{n} \geqslant -h(\theta)-\frac{\epsilon}{3}\geqslant -h(\Theta)-\frac{5}{6}\epsilon.
$$
Consequently,
\begin{align}\label{eq_lemma_a3_2}
	& \int_{\Theta} \exp\left\{n\left(h\left(\Theta \right)+\epsilon+  \frac{\ln R_n(\theta)}{n}\right)\right\}  \Pi_0(d\theta)\geqslant \Pi_0\left(N_{\frac{\epsilon}{2}}\cap Q_{\bullet,\omega}^n\left(\frac{\epsilon}{3}\right) \right)\exp\left\{\frac{\epsilon}{6}n\right\}
\end{align}According to Lemma \ref{lemma_A2},
$$
\Pi_0\left(\Theta \setminus Q_{\bullet,\omega}^n\left(\frac{\epsilon}{3}\right) \right) \overset{P}{\rightarrow} 0
$$
and by definition that $\Pi_0\left( N_{\frac{\epsilon}{2}}\right)>0$, such that
$$
\Pi_0\left(N_{\frac{\epsilon}{2}}\cap Q_{\bullet,\omega}^n\left(\frac{\epsilon}{3}\right) \right) \geqslant \Pi_0\left(N_{\frac{\epsilon}{2}}\right) - \Pi_0\left(\Theta \setminus Q_{\bullet,\omega}^n\left(\frac{\epsilon}{3}\right) \right) \overset{P}{\rightarrow} \Pi_0\left(N_{\frac{\epsilon}{2}}\right)>0.
$$
This concludes the proof in combination with \eqref{eq_lemma_a3_1} and \eqref{eq_lemma_a3_2}.
\end{proof}
\end{lemma}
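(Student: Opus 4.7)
The plan is to reduce the event $\{\Pi_0(R_n) > \exp(-n(h(\Theta)+\epsilon))\}$ to a statement about a suitable logarithmic integral exceeding $1$, and then to produce that lower bound by restricting the integral to a carefully chosen subset of $\Theta$ on which both (i) $h(\theta)$ is close to its essential infimum $h(\Theta)$, and (ii) the uniform LLN of Assumption~\ref{ass:2.3} has forced $\frac{1}{n}\ln R_n(\theta,\omega)$ close to $-h(\theta)$. Multiplying through by $\exp\{n(h(\Theta)+\epsilon)\}$, it is equivalent to show that
\[
P\!\left(\int_{\Theta}\exp\!\Big\{n\Big(h(\Theta)+\epsilon+\tfrac{1}{n}\ln R_n(\theta,\omega)\Big)\Big\}\,\Pi_0(d\theta)>1\right)\longrightarrow 1,
\]
so the task is to exhibit, with probability tending to $1$, a piece of the parameter space on which the exponent is bounded below by a positive multiple of $n$.

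For the first ingredient, set $N_{\epsilon/2}:=\{\theta\in\Theta:h(\theta)\le h(\Theta)+\epsilon/2\}$. By the very definition of the essential infimum $h(\Theta)$ with respect to $\Pi_0$, this set has strictly positive prior mass, i.e.\ $\Pi_0(N_{\epsilon/2})>0$; this is a purely deterministic fact that does not involve $\omega$. For the second ingredient, take $\eta=\epsilon/3$ and consider the $\omega$-section $Q^n_{\bullet,\omega}(\epsilon/3)$ from Lemma~\ref{lemma_A2}, on which $\frac{1}{n}\ln R_n(\theta,\omega)\ge -h(\theta)-\epsilon/3$ by construction. Lemma~\ref{lemma_A2} tells us precisely that $\Pi_0\bigl(Q^n_{\bullet,\omega}(\epsilon/3)\bigr)\xrightarrow{P}1$, equivalently $\Pi_0\bigl(\Theta\setminus Q^n_{\bullet,\omega}(\epsilon/3)\bigr)\xrightarrow{P}0$.

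Now I would combine these ingredients. On the intersection $N_{\epsilon/2}\cap Q^n_{\bullet,\omega}(\epsilon/3)$, the two defining inequalities stack to give
\[
\tfrac{1}{n}\ln R_n(\theta,\omega)\ge -h(\theta)-\tfrac{\epsilon}{3}\ge -h(\Theta)-\tfrac{\epsilon}{2}-\tfrac{\epsilon}{3},
\]
so the bracketed exponent inside the integral is at least $\epsilon/6$ throughout this set. Restricting the integral to this set yields the pointwise bound
\[
\int_{\Theta}\exp\!\Big\{n\Big(h(\Theta)+\epsilon+\tfrac{\ln R_n(\theta)}{n}\Big)\Big\}\Pi_0(d\theta)\ge \Pi_0\!\left(N_{\epsilon/2}\cap Q^n_{\bullet,\omega}(\epsilon/3)\right)e^{n\epsilon/6},
\]
and an elementary subadditivity argument gives $\Pi_0(N_{\epsilon/2}\cap Q^n_{\bullet,\omega}(\epsilon/3))\ge \Pi_0(N_{\epsilon/2})-\Pi_0(\Theta\setminus Q^n_{\bullet,\omega}(\epsilon/3))$, which is bounded in probability below by the strictly positive number $\Pi_0(N_{\epsilon/2})/2$ for all large $n$.

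The main obstacle, and the only delicate point, is bridging the deterministic/prior-theoretic object $N_{\epsilon/2}$ with the random object $Q^n_{\bullet,\omega}(\epsilon/3)$: Assumption~\ref{ass:2.3} gives convergence of $\Pi_0(Q^n_{\bullet,\omega}(\epsilon/3))$ to $1$ only \emph{in probability}, so one cannot pointwise fix a single $\omega$. The linear subadditivity bound above, however, is exactly designed to transfer convergence in probability of the complement to convergence in probability of the intersection, after which combining with the factor $e^{n\epsilon/6}\to\infty$ and noting that the product exceeds $1$ for large $n$ on the high-probability event completes the argument. The choice to partition $\epsilon$ as $\epsilon/2+\epsilon/3+\epsilon/6$ is what leaves a strictly positive surplus $\epsilon/6$ to drive the exponential blow-up.
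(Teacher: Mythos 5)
Your proposal is correct and follows essentially the same route as the paper's proof: the same reduction to the exponentiated integral, the same sets $N_{\epsilon/2}$ and $Q^n_{\bullet,\omega}(\epsilon/3)$ with the same $\epsilon/2+\epsilon/3$ split leaving an $\epsilon/6$ surplus, and the same subadditivity bound to control the intersection in probability. No gaps.
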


\begin{lemma}\label{lemma_A4}
	Let Assumptions \ref{ass:2.1}, \ref{ass:2.2} and \ref{ass:2.3} hold, and take any set $G\in\mathcal{T}$ such that $\Pi_0(G) > 0$. Then for any $\epsilon >0$,
$$
\lim_{n \rightarrow \infty} P\left(\left\{\omega\in\Omega: \Pi_0\left(G R_n \right)\leqslant \exp\left\{n\left(-h(G)+\epsilon\right)\right\}\right\}\right) = 1,
$$
where $\Pi_0\left(GR_n \right)=\int_{G}R_n(\theta,\omega)\Pi_0(d\theta)$.
\begin{proof}
For any $\omega\in\Omega$ and sufficiently large $n$ such that 
$$
\sup_{\theta\in G}\left|\frac{1}{n}\ln\left(R_n(\theta,\omega)\right)+h(\theta) \right|  \leqslant \epsilon,
$$
we have
\begin{align*}
\Pi_0\left(GR_n \right)
&=\int_{G}\exp\{\ln(R_n(\theta,\omega))\}\Pi_0(d\theta)\leqslant \int_{G}\exp\left\{n\left(-h(\theta)+\epsilon \right)  \right\}\Pi_0(d\theta) \\
&\leqslant \exp\left\{n\left(-h(G)+\epsilon\right)\right\} \,\Pi_0(G)\leqslant \exp\left\{n\left(-h(G)+\epsilon\right)\right\} .
\end{align*}
\end{proof}

\end{lemma}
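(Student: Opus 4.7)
The plan is to use Assumption \ref{ass:2.3} --- the uniform convergence in probability of $\frac{1}{n}\ln R_n(\theta)+h(\theta)$ to zero over all of $\Theta$ --- to obtain a pointwise upper bound on $R_n(\theta,\omega)$ that is uniform in $\theta\in G$, and then to integrate this bound against $\Pi_0$ restricted to $G$, using the essential-infimum characterization of $h(G)$ to collapse the integrand to a single exponential.

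Concretely, I would define the high-probability event
$$
E_n \coloneqq \left\{ \omega\in\Omega : \sup_{\theta\in\Theta}\left|\frac{1}{n}\ln R_n(\theta,\omega)+h(\theta)\right|\leqslant \epsilon \right\},
$$
which by Assumption \ref{ass:2.3} satisfies $P(E_n)\to 1$ as $n\to\infty$. On $E_n$, for every $\theta\in G\subseteq\Theta$ one has $\ln R_n(\theta,\omega)\leqslant n(-h(\theta)+\epsilon)$, and exponentiating gives the pointwise bound $R_n(\theta,\omega)\leqslant \exp\{n(-h(\theta)+\epsilon)\}$. Integrating over $G$ with respect to $\Pi_0$ then yields
$$
\Pi_0(GR_n) \;\leqslant\; \int_G \exp\{n(-h(\theta)+\epsilon)\}\,\Pi_0(d\theta).
$$
Because $h(G)$ is by definition the $\Pi_0$-essential infimum of $h$ on $G$, the inequality $h(\theta)\geqslant h(G)$ holds for $\Pi_0$-almost every $\theta\in G$, so the integrand is dominated $\Pi_0$-a.e.\ on $G$ by the constant $\exp\{n(-h(G)+\epsilon)\}$. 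Pulling this constant out and using $\Pi_0(G)\leqslant 1$ delivers $\Pi_0(GR_n)\leqslant \exp\{n(-h(G)+\epsilon)\}$ on $E_n$, hence on an event whose $P$-probability tends to one, which is exactly the claim.

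The one step that requires a moment's thought is the passage from the uniform bound on $\Theta$ to the integrated bound on $G$: since $h(G)$ is an essential infimum rather than an ordinary infimum, the domination $-h(\theta)\leqslant -h(G)$ is valid only on a $\Pi_0$-conull subset of $G$. This is harmless because the outer integration is precisely against $\Pi_0$, but it is the sole conceptual subtlety; apart from this observation, the proof is a direct and essentially one-line combination of Assumption \ref{ass:2.3} with the definition of $h(G)$, and I do not expect any further obstacle.
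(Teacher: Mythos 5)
Your proof is correct and follows essentially the same route as the paper's: bound $\ln R_n(\theta,\omega)$ uniformly over $G$ on the high-probability event supplied by Assumption \ref{ass:2.3}, exponentiate, integrate against $\Pi_0$, and replace $h(\theta)$ by its essential infimum $h(G)$. Your explicit remark that the domination $h(\theta)\geqslant h(G)$ holds only $\Pi_0$-a.e.\ but that this suffices because the integration is against $\Pi_0$ is a point the paper leaves implicit; otherwise the two arguments coincide.
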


With these lemmas, we can proceed to proving Theorem~\ref{thm:2.1}. 
\begin{proof}[Proof of Theorem~\ref{thm:2.1}]

By the assumption that $h(A)>h(\Theta)$, we can choose $\epsilon$ such that $h(\Theta)-h\left(A \right)+2\epsilon < 0$.
Then, on the intersection of the asymptotic 1-sets of Lemmas~\ref{lemma_A3} and \ref{lemma_A4} it holds
\begin{align*}
	&\Pi_n(A)=\frac{\Pi_0(AR_n)}{\Pi_0(R_n)}\leqslant \exp\left\{n\left(h\left(\Theta \right) - h(A) + 2\epsilon\right) \right\}\to 0,
\end{align*}
such that indeed $\Pi_n\left(A \right)\overset{P}{\rightarrow} 0$ for $n\to\infty$.
\end{proof}

\section{Parameter space, Lipschitz functions and Bernstein polynomials}\label{appendix:B}
In this section, we will first prove Proposition \ref{proposition:3.1}, and then give some results regarding Lipschitz functions and Bernstein polynomials.

\begin{proof}[Proof of Proposition \ref{proposition:3.1}]
According to the Arzel\`a-Ascoli theorem, the compactness of $\Theta$ is guaranteed if it is bounded, closed, and equicontinuous. The boundedness of $\Theta$ with respect to $\Vert\cdot \Vert_{\infty}$ follows directly from the definition of $\Theta$. The equicontinuity follows immediately from the uniform boundedness of the Lipschitz constants over $\Theta$. 

To prove that $\Theta$ is closed, consider an arbitrary sequence $\{\varphi_n \}_{n=1}^{\infty}$  of functions in $\Theta$ which converge uniformly to $\varphi\in C([0,1])$. It is then sufficient to show that $\varphi\in\Theta$. By the triangle inequality it follows from $\Vert \varphi_n - \varphi \Vert_{\infty}\rightarrow0$, that $\Vert \varphi_n \Vert_{\infty}\rightarrow\Vert \varphi \Vert_{\infty}$. 
Thus, for any $x\in[0,1]$,
$$
\varphi(x)=\lim_{n\rightarrow\infty}\varphi_n(x)\geqslant m.
$$
Furthermore,
\begin{align*}
	&\|\varphi\|+L(\varphi)=\lim_{n\to\infty}\|\varphi_n\|+\sup_{0\leqslant x\neq y\leqslant 1}\lim_{n\rightarrow\infty}\frac{|\varphi_n(x)-\varphi_n(y)|}{|x-y|} \\
	&=\sup_{0\leqslant x\neq y\leqslant 1}\lim_{n\to\infty}\left( \|\varphi_n\|+ \frac{|\varphi_n(x)-\varphi_n(y)|}{|x-y|} \right)\leqslant \limsup_{n\to\infty}\Vert \varphi_n\Vert_{\mathrm{Lip}} \leqslant M.
\end{align*}
\end{proof}\smallskip

\subsection*{Results regarding Lipschitz norm and the Bernstein polynomials}
Let 
$$
\textrm{Lip}([0,1])\coloneqq \left\{\varphi\in C([0,1]): \Vert \varphi \Vert_{\textrm{Lip}}<\infty \right\}.
$$
The following inequalities regarding the Lipschitz norm will be useful in the proof of positivity of prior probabilities in Appendix \ref{appendix:C}.
\begin{proposition}\label{proposition:B.1}\ 
	\begin{enumerate}[(a)]
	\item If $f\in \mathrm{Lip}([0,1])$ and $f>0$, then we have  $\frac{1}{f},f^{\delta}\in \mathrm{Lip}([0,1])$ for any $0\leqslant \delta\leqslant 1$.\label{proposition:B.1(a)}
	\item For any $f,g\in \mathrm{Lip}([0,1])$, we have $fg \in\mathrm{Lip}([0,1])$. Furthermore, $\|fg\|_{\mathrm{Lip}}\leqslant \|f\|_{\mathrm{Lip}}\|g\|_{\mathrm{Lip}}$.\label{proposition:B.1(b)}
\end{enumerate}
\begin{proof}
The proof of (a) is a simple consequence of Theorems 12.5 and 12.6 of \citet{applied_math_body_and_soul}. The proof of (b) can be found in \citet[Proposition 11.2.1]{dudley}
\end{proof}
\end{proposition}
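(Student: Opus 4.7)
For part (a), the plan is to exploit the fact that any Lipschitz function on the compact interval $[0,1]$ is bounded, and when additionally positive, is bounded away from zero. A Lipschitz $f$ is continuous, so by compactness $m_f := \inf_{x\in[0,1]} f(x) > 0$ and $M_f := \|f\|_\infty < \infty$ are attained. For the reciprocal I would write
$$\left|\frac{1}{f(x)} - \frac{1}{f(y)}\right| = \frac{|f(y)-f(x)|}{f(x)f(y)} \leqslant \frac{L(f)}{m_f^2}\,|x-y|,$$
which combined with $\|1/f\|_\infty = 1/m_f$ gives $1/f \in \mathrm{Lip}([0,1])$. For $f^\delta$ with $0 \leqslant \delta \leqslant 1$, I would apply the mean value theorem to the scalar map $t\mapsto t^\delta$ on the interval $[m_f, M_f] \subset (0,\infty)$: for any $a,b$ in that interval there is $c$ between them with $|a^\delta - b^\delta| = \delta\, c^{\delta-1}|a-b|$. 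Since $\delta-1 \leqslant 0$ and $c \geqslant m_f > 0$, we get $c^{\delta-1} \leqslant m_f^{\delta-1}$, and substituting $a=f(x)$, $b=f(y)$ produces $|f(x)^\delta - f(y)^\delta| \leqslant \delta\, m_f^{\delta-1} L(f)\,|x-y|$, while $\|f^\delta\|_\infty = M_f^\delta$ handles the sup-norm piece.

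For part (b), I would use the standard telescoping identity
$$f(x)g(x) - f(y)g(y) = f(x)\bigl(g(x)-g(y)\bigr) + \bigl(f(x)-f(y)\bigr)g(y)$$
and the triangle inequality to get
$$|f(x)g(x) - f(y)g(y)| \leqslant \bigl(\|f\|_\infty L(g) + L(f)\|g\|_\infty\bigr)|x-y|,$$
so $L(fg) \leqslant \|f\|_\infty L(g) + L(f)\|g\|_\infty$. Combining this with $\|fg\|_\infty \leqslant \|f\|_\infty\|g\|_\infty$ yields
$$\|fg\|_{\mathrm{Lip}} \leqslant \|f\|_\infty\|g\|_\infty + \|f\|_\infty L(g) + L(f)\|g\|_\infty \leqslant \bigl(\|f\|_\infty + L(f)\bigr)\bigl(\|g\|_\infty + L(g)\bigr) = \|f\|_{\mathrm{Lip}}\|g\|_{\mathrm{Lip}},$$
the last step following because the expansion of the right-hand side contains the additional nonnegative cross term $L(f)L(g)$.

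The proofs are essentially routine; the only point requiring care is the Lipschitz property of $f^\delta$ when $\delta < 1$. The map $t\mapsto t^\delta$ is not Lipschitz on any neighbourhood of $0$, only H\"older of order $\delta$, which is precisely why the strict positivity assumption on $f$ is indispensable: it allows one to restrict attention to the interval $[m_f, M_f]$, on which $t\mapsto t^\delta$ has a uniformly bounded derivative. Beyond this observation, no genuine obstacle is expected.
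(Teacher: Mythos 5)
Your proof is correct and is precisely the standard elementary argument that the paper delegates to its cited references (the reciprocal/power bounds via positivity and compactness for (a), and the telescoping product decomposition plus the submultiplicativity computation for (b)). No gaps; the observation that strict positivity is what rescues $t\mapsto t^{\delta}$ from being merely H\"older near $0$ is exactly the right point of care.
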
\smallskip

The general $n$-th order Bernstein polynomial is of the form
$$
b(x;n,w)= \sum_{i=1}^n w_{i}\beta(x;i,n-i+1), \ x \in [0,1],
$$
where $w=(w_1,\cdots,w_n)\in \mathbb{R}^n$ and $\beta\left(x;a,b\right)= \frac{\mathrm{\Gamma} (a+b)}{\mathrm{\Gamma}(a)\mathrm{\Gamma}(b)}x^{a-1}{(1-x)}^{b-1},\ x\in[0,1], \ a,b\geqslant 0$. 
\begin{proposition}\label{proposition:B.2}\ \smallskip
For any $w=(w_1,\cdots,w_n)\in \mathbb{R}^n$, we have 
\begin{my_enum}
	\item $\Vert b(\cdot;n,w)\Vert_{\infty}\leqslant n \max_{i=1,2,\cdots,n}|w_{i}|$.\label{proposition:B.2(a)}
	\item $L(b(\cdot;n,w)) \leqslant 2n^2\max_{i=1,2,\cdots,n}|w_{i}|$.\label{proposition:B.2(b)}
\end{my_enum}
\begin{proof}
The first inequality is proved in Lemma 2.2 of \citet[p.~21]{Kruijer}. We will focus on proving the second inequality. By direct calculation, we obtain
$$
b^{\prime}(x;n,w)=n\sum_{i=1}^{n-1}(w_{i+1}-w_{i}) \beta(x;i,n-i)
$$
with the convention $\sum_{i=1}^0=0$. By noticing that $\sum_{i=1}^{n-1}\beta(x;i,n-i)=n-1$, we have
\begin{align*}
&|b^{\prime}(x;k,w)|
\leqslant n\sum_{i=1}^{n-1}|w_{i+1,n}-w_{i,n}| \beta(x;i,n-i) \\
&\leqslant n\max_{i=1,\cdots,n-1}\{|w_{i+1}-w_{i}|\}\sum_{i=1}^{n-1}\beta(x;i,n-i)
=n(n-1)\max_{i=1,\cdots,n-1}\{|w_{i+1}-w_{i}|\} 
\end{align*}
for all $0\leqslant x\leqslant 1$.
Therefore,
\begin{align*}
&L(b(\cdot;n,w))=\left\Vert b^{\prime}(\cdot;n,w) \right\Vert_{\infty}\leqslant n(n-1)\max_{i=1,\cdots,n-1}\{|w_{i+1}-w_{i}|\}\leqslant 2n^2\max_{i=1,2,\cdots,n}|w_{i}|.
\end{align*}
\end{proof}
\end{proposition}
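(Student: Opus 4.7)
Part (a) I would prove by rewriting $\beta(x;i,n-i+1)=n\binom{n-1}{i-1}x^{i-1}(1-x)^{n-i}$ and observing that the binomial theorem gives $\sum_{i=1}^n \beta(x;i,n-i+1)=n$ for every $x\in[0,1]$. A direct application of the triangle inequality then yields
\[
|b(x;n,w)|\leqslant \max_{i}|w_i|\sum_{i=1}^n \beta(x;i,n-i+1)=n\max_i|w_i|.
\]
This is already stated to be Lemma 2.2 of Kruijer, so I would simply cite it and move on.

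The bulk of the work goes into part (b). Since $b(\cdot;n,w)$ is a polynomial on $[0,1]$, the mean value theorem gives $L(b(\cdot;n,w))=\|b'(\cdot;n,w)\|_\infty$, so it suffices to bound the derivative uniformly. The plan is to derive the clean identity
\[
b'(x;n,w)=n\sum_{i=1}^{n-1}(w_{i+1}-w_i)\,\beta(x;i,n-i),
\]
which expresses $b'$ as a weighted sum of $(n-1)$st-order Beta densities with weights given by consecutive differences.

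To establish this identity I would differentiate each summand using the product rule on $x^{i-1}(1-x)^{n-i}$, observe that the boundary indices $i=1$ and $i=n$ kill one of the two resulting terms, shift the index $j=i-1$ in the surviving piece of the first term, and then use the combinatorial identities $\binom{n-1}{j}\,j=(n-1)\binom{n-2}{j-1}$ and $\binom{n-1}{i-1}(n-i)=(n-1)\binom{n-2}{i-1}$ to merge the two sums into a single sum of consecutive-difference coefficients. Recognising the resulting monomials as proportional to $\beta(x;i,n-i)$ yields the displayed identity. Once this is in hand, the triangle inequality plus the telescoping binomial identity $\sum_{i=1}^{n-1}\beta(x;i,n-i)=n-1$ gives
\[
|b'(x;n,w)|\leqslant n(n-1)\max_{1\leqslant i\leqslant n-1}|w_{i+1}-w_i|\leqslant 2n(n-1)\max_i|w_i|\leqslant 2n^2 \max_i|w_i|,
\]
completing the proof.

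\textbf{Main obstacle.} There is no genuine analytic difficulty here; everything reduces to a careful algebraic manipulation. The only step that requires attention is the derivation of the derivative identity, where one must handle the index shift and the two combinatorial simplifications correctly so that the result is written as a single sum over $i=1,\ldots,n-1$ of the differences $w_{i+1}-w_i$ against $\beta(x;i,n-i)$. Once that identity is in place, the remaining estimates are immediate consequences of the triangle inequality and the fact that the Beta densities sum to a constant, so no further ingredients are needed.
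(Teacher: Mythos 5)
Your proposal is correct and follows essentially the same route as the paper: part (a) by citing Kruijer's Lemma 2.2 (or the elementary binomial-sum argument), and part (b) via the derivative identity $b'(x;n,w)=n\sum_{i=1}^{n-1}(w_{i+1}-w_i)\beta(x;i,n-i)$ combined with $\sum_{i=1}^{n-1}\beta(x;i,n-i)=n-1$ and the triangle inequality. The only difference is that you spell out the index-shift and combinatorial manipulations behind the identity, which the paper compresses into ``by direct calculation.''
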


Let $f \in \textrm{Lip}([0,1])$ be a probability density function with corresponding cumulative distribution function $F$, then the $n$-th order (Kantorovich-type) Bernstein polynomial associated with $f$ is defined as
$$
b(x;n,w_n(F))= \sum_{i=1}^n w_{i,n}(F)\beta(x;i,n-i+1), \ x \in [0,1],
$$
where $w_{i,n}(F)=F\left(\frac{i}{n}\right)-F\left(\frac{i-1}{n}\right)$, $i=1,2,\cdots,n$, and \newline
 $w_n(F)= \left(w_{1,n}(F),w_{2,n}(F),\cdots,w_{n,n}(F) \right)$.

\begin{proposition}\label{proposition:B.3} We have
\begin{my_enum}
	\item $\| f - b\left(\cdot;n,w_n(F)\right)\|_{\infty}\leqslant \frac{3L(f)}{\sqrt{n}}.$ \label{proposition:B.3(a)}
	\item If $f$ is continuously differentiable, then $\| f - b\left(\cdot;n,w_n(F)\right)\|_{\mathrm{Lip}}\to 0.$ \label{proposition:B.3(b)}
	\item $\|b\left(\cdot;n,w_n(F)\right) \|_{\mathrm{Lip}}\leqslant \|f \|_{\mathrm{Lip}}$. \label{proposition:B.3(c)}
\end{my_enum}
\begin{proof}
	(a) is proved in Lemma 2.1 of \citet[p.~20]{Kruijer}. (b) is a direct consequence of Theorem 2.4 of \citet{kantorovich_poly}. As for (c), we have by definition
$$
w_{i,n}(F)=F\left(\frac{i}{n}\right)-F\left(\frac{i-1}{n}\right)=\int_{\frac{i-1}{n}}^{\frac{i}{n}}f(x)dx.
$$
Thus, we get $w_{i,n}(F)\leqslant \frac{1}{n}\Vert f \Vert_{\infty}, \ i=1,2,\cdots,n,$ which implies $\|b(\cdot;k,w_n(F))\|_{\infty}\leqslant \Vert f \Vert_{\infty}$ in light of the fact that $\sum_{i=1}^n\beta(x;i,n-i+1)=n$.

As for the bound of Lipschitz constant, because for any $i=1,2,\cdots,n-1$, we have
$$
|w_{i+1,n}(F)-w_{i,n}(F)|\leqslant \int_{\frac{i-1}{n}}^{\frac{i}{n}}\left|f\left( x+\frac{1}{n}\right) - f\left(x\right) \right|dx\leqslant \frac{L(f)}{n^2}.
$$
Therefore, for all $x\in[0,1]$,
$$
|b^{\prime}(x;k,w_n(F))|=n\left|\sum_{i=1}^{n-1}(w_{i+1,n}(F)-w_{i,n}(F)) \beta(x;i,n-i)\right|\leqslant L(f),
$$
which implies $L(b(\cdot;k,w_n(F)))\leqslant L(f)$. Together this yields (c).
\end{proof}
\end{proposition}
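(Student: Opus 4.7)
The three claims are largely independent, so my plan is to handle them separately, invoking cited results for (a) and (b) and giving a direct computation for (c).

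For (a), I would simply invoke Lemma 2.1 of Kruijer's thesis, which is the standard approximation bound of a Lipschitz function by its Kantorovich--Bernstein polynomial. The underlying mechanism is that for a Lipschitz $f$, the pointwise error $|f(x) - b(x;n,w_n(F))|$ can be bounded by splitting the sum over indices $i$ with $|i/n - x|$ small (where Lipschitz continuity controls the error by the modulus of continuity at scale $1/\sqrt n$) and indices far from $x$ (handled by a Chebyshev-type estimate against the binomial variance $x(1-x)/n$), which together produce the $3L(f)/\sqrt n$ bound.

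For (c), I would estimate both components of $\|\cdot\|_{\mathrm{Lip}}$ directly. Since $f\geqslant 0$, the weights satisfy $0\leqslant w_{i,n}(F) = \int_{(i-1)/n}^{i/n} f\leqslant \|f\|_\infty/n$. Combined with the identity $\sum_{i=1}^n \beta(x;i,n-i+1)=n$ (which follows immediately from $\beta(x;i,n-i+1)=n\binom{n-1}{i-1}x^{i-1}(1-x)^{n-i}$ and the binomial theorem), this yields $\|b(\cdot;n,w_n(F))\|_\infty\leqslant \|f\|_\infty$. For the Lipschitz constant I would differentiate to obtain
$$b'(x;n,w_n(F)) = n\sum_{i=1}^{n-1}(w_{i+1,n}(F)-w_{i,n}(F))\,\beta(x;i,n-i),$$
then rewrite $w_{i+1,n}(F)-w_{i,n}(F) = \int_{(i-1)/n}^{i/n}[f(x+1/n)-f(x)]\,dx$ so that the Lipschitz assumption on $f$ gives $|w_{i+1,n}(F)-w_{i,n}(F)|\leqslant L(f)/n^2$. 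Using the companion identity $\sum_{i=1}^{n-1}\beta(x;i,n-i)=n-1$, this delivers $\|b'\|_\infty\leqslant (n-1)L(f)/n\leqslant L(f)$, hence $L(b(\cdot;n,w_n(F)))\leqslant L(f)$, and summing the two estimates gives the claim.

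For (b), observe first that if $f\in C^1([0,1])$ then $L(f)=\|f'\|_\infty<\infty$, so (a) already gives the sup-norm half: $\|f-b(\cdot;n,w_n(F))\|_\infty\to 0$. What remains is the Lipschitz-constant half, $L(f-b(\cdot;n,w_n(F)))\to 0$, which amounts to uniform convergence of the derivative $b'(\cdot;n,w_n(F))$ to $f'$. This is the main obstacle of the proposition, and my plan is to invoke it as a black box from Theorem 2.4 of the cited Kantorovich polynomial reference; conceptually, this is the analogue for Kantorovich-type polynomials of the classical fact that Bernstein polynomials of a $C^1$ function converge in $C^1$-norm, proved by recognising the derivative of the Kantorovich polynomial as a first-order difference operator applied to $f$ smoothed against binomial weights and exploiting uniform continuity of $f'$. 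Adding the two convergences gives $\|f-b(\cdot;n,w_n(F))\|_{\mathrm{Lip}}\to 0$, completing (b).
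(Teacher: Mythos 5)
Your proposal is correct and follows essentially the same route as the paper: (a) and (b) are delegated to the same cited results (Kruijer's Lemma 2.1 and Theorem 2.4 of the Kantorovich-polynomial reference), and your direct computation for (c) — bounding the weights by $\Vert f\Vert_{\infty}/n$, bounding the successive weight differences by $L(f)/n^{2}$, and using the identities $\sum_{i=1}^{n}\beta(x;i,n-i+1)=n$ and $\sum_{i=1}^{n-1}\beta(x;i,n-i)=n-1$ — is exactly the paper's argument. The only additions are your explanatory sketches of the cited proofs, which are accurate but not needed.
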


\section{Proof of Proposition \ref{proposition:3.2}}\label{appendix:C}
Let 
$$
\tau_{0}=\int_0^1\frac{\varphi_0(x)}{\varphi_{\mathrm{par}}^{\delta}(x)}dx,
$$
and $q_0=\frac{1}{\tau_0}\frac{\varphi_0}{\varphi_{\mathrm{par}}^{\delta}}$. Then $q_0$ is a probability density function on $[0,1]$. Denote by $Q_{0}$ the cumulative distribution function corresponding to $q_{0}$. Our goal is to show that  $\Pi_C\left({B}(\varphi_0,r) \right)>0$ for all $r>0$. To this end, consider
\begin{align*}
	B_{M_1,M_2,M_3}=&\left\{\tau b\left(\cdot;k,w_{k}(G)\right)\varphi_{\mathrm{par}}^{\delta}:\right.\\
&\quad \left. k=M_1,|\tau-\tau_{0}|\leqslant M_2, \max_{i=1,2,\cdots,k}|w_{i,k}(G)-w_{i,k}(Q_{0})|\leqslant M_3  \right\}
\end{align*}
We will show that (a) $\Pi_C(B_{M_1,M_2,M_3})>0$  for all $M_1=1,2,\ldots$ and $M_2,M_3>0$ and (b) that there exist such $M_1,M_2,M_3$ that $B_{M_1,M_2,M_3}\subset B(\varphi_0,r)$  for any $r>0$.

Recall that $\rho$, which is the probability mass function of $K$, has full support. Clearly, we have $\rho(M_1)>0$. Since both the distribution of $\tau$ and the Dirichlet process have full support (see e.g. \citealt[Appendix B.1]{choudhuri_et_al}), the following two sets:
$$
\left\{\tau\in (0,\infty): |\tau-\tau_{0}|\leqslant M_2 \right\}
$$
and
$$
\left\{G: \max_{i=1,2,\cdots,M_1}|w_{i,M_1}(G)-w_{i,M_1}(Q_{0})|\leqslant M_3 \right\}
$$
also have positive probabilities. Therefore, $\Pi_{C}(B_{M_1,M_2,M_2})>0$, showing (a).

For (b), first note that by the triangle inequality, Propositions \ref{proposition:B.2}\eqref{proposition:B.2(a)} and \ref{proposition:B.3}\eqref{proposition:B.3(a)},
\begin{align*}
&\Vert \tau b(\cdot;k,w_k(G))\varphi_{\mathrm{par}}^{\delta} -\tau_0 q_0 \varphi_{\mathrm{par}}^{\delta}\Vert_{\infty} \\
&\leqslant \tau \Vert b(\cdot;k,w_k(G)-w_k(Q_0)) \Vert_{\infty} \Vert \varphi_{\mathrm{par}}\Vert_{\infty}^{\delta}
 + |\tau-\tau_0|\Vert b(\cdot;k,w_k(Q_0)) \Vert_{\infty} \Vert \varphi_{\mathrm{par}}\Vert_{\infty}^{\delta} \\
&\quad + \tau_0 \Vert b(\cdot;k,w_k(Q_0)) - q_0 \Vert_{\infty} \Vert \varphi_{\mathrm{par}}\Vert_{\infty}^{\delta}\\
&\le (\tau_0+M_2)\, M_1\,M_3 \,\|\varphi_{\mathrm{par}}\|^{\delta}_{\infty}+ M_2\, \|q_0\|_{\infty}\|\varphi_{\mathrm{par}}\|^{\delta}_{\infty}+\tau_0 \frac{L(q_0)}{\sqrt{M_1}},
\end{align*}
which by Propositions \ref{proposition:B.1}\eqref{proposition:B.1(a)} can be made arbitrarily small by first choosing an $M_1$ large enough for the last term to become small as well as an $M_2$ small enough for the second term to become small and finally (dependent on $M_1$ and $M_2$) an $M_3$ small enough for the first term to become small. 

This shows in particular that for any $\epsilon>0$ there exists $M_1$ big and $M_2,M_3$ small enough that $\|\varphi-\varphi_0\|_{\infty}<\epsilon$ for any $\varphi\in B_{M_1,M_2,M_3}$ as well as
$\varphi(x)\ge \varphi_0(x)- \|\varphi-\varphi_0\|_{\infty}>m$ for $\epsilon$ small enough by Assumption~\ref{eq_ass_param_space}.  In particular, for any $r>0$ it holds $\|\varphi-\varphi_0\|_{\infty}<r$ and $\varphi$ fulfills the lower bound in \eqref{eq_param_space}.
We will now complete the proof that for sufficiently large $M_1$ and small $M_2,M_3$ it holds that $\|\varphi\|_{\mathrm{Lip}}$ fulfills the upper bound.
Indeed, we get by the triangle inequality, Propositions \ref{proposition:B.1}\eqref{proposition:B.1(b)} and \ref{proposition:B.2}
\begin{align*}
	&\Vert \tau b(\cdot;k,w_k(G))\varphi_{\mathrm{par}}^{\delta} \Vert_{\mathrm{Lip}}\\
	&\leqslant \Vert \tau b(\cdot;k,w_k(Q_0))\varphi_{\mathrm{par}}^{\delta} \Vert_{\mathrm{Lip}} + \Vert \tau b(\cdot;k,w_k(G)-w_k(Q_0))\varphi_{\mathrm{par}}^{\delta} \Vert_{\mathrm{Lip}}\\
	& \leqslant \left( 1+ \frac{M_2}{\tau_0} \right)\, \left\Vert \tau_0 b(\cdot;k,w_k(Q_0))\varphi_{\mathrm{par}}^{\delta} \right\Vert_{\mathrm{Lip}} 
+  3(\tau_0+M_2) M_1^2 M_3 \Vert \varphi_{\mathrm{par}}^{\delta} \Vert_{\mathrm{Lip}}. 
\end{align*}
The second part becomes again arbitrarily small for appropriately chosen $M_j$, $j=1,2,3$, such that the upper bound of \eqref{eq_param_space} can be met as soon as\newline $\Vert \tau_0 b(\cdot;k,w_k(Q_0))\varphi_{\mathrm{par}}^{\delta} \Vert_{\mathrm{Lip}} <M$.

Indeed, by Propositions \ref{proposition:B.1}\eqref{proposition:B.1(b)} and \ref{proposition:B.3}\eqref{proposition:B.3(c)}, we have
\begin{align}\label{eq_param_diff}
	\Vert \tau_0 b(\cdot;k,w_k(Q_0))\varphi_{\mathrm{par}}^{\delta} \Vert_{\mathrm{Lip}}\le \tau_0 \Vert q_0\Vert_{\mathrm{Lip}}\, \Vert \varphi_{\mathrm{par}}^{\delta} \Vert_{\mathrm{Lip}}=\left\Vert \frac{\varphi_0}{\varphi_{\mathrm{par}}^{\delta}} \right\Vert \, \Vert\varphi_{\mathrm{par}}^{\delta} \Vert_{\mathrm{Lip}},
\end{align}
such that the assertion follows from Assumption~\ref{eq_ass_param_space}.

\begin{proof}[Proof of Remark~\ref{rem_param_space}]
	In this case, we can prove \eqref{eq_param_diff} without Assumption~\ref{eq_ass_param_space}, since by the triangle inequality and Proposition \ref{proposition:B.1}\eqref{proposition:B.1(b)},
\begin{align*}
&\Vert \tau_0 b(\cdot;k,w_k(Q_0))\varphi_{\mathrm{par}}^{\delta} \Vert_{\mathrm{Lip}}\leqslant \tau_0\|b(\cdot;k,w_k(Q_0)) - q_0\|_{\mathrm{Lip}}\,\|\varphi_{\mathrm{par}}^{\delta} \|_{\mathrm{Lip}} + \|\varphi_0 \|_{\mathrm{Lip}},
\end{align*}
where the first term becomes arbitrarily small for $M_1=k$ large enough due to Proposition \ref{proposition:B.3}\eqref{proposition:B.3(b)} and the second one is strictly smaller than $M$.
\end{proof}

\section*{Acknowledgements}
RM gratefully acknowledges support by a James Cook Fellowship from Government funding, administered by the Royal Society Te Ap\={a}rangi, and CK and RM support by the DFG Grant KI 1443/3-2.

\bibliographystyle{ba} 
\bibliography{ref}

\end{document}